\title{Universality for Cokernels of Dedekind Domain Valued Random Matrices}
\author{Eric Yan}
\date{\vspace{-5ex}}
\newtheorem{thm}{Theorem}[section]
\newtheorem{defn}[thm]{Definition}
\newtheorem{lemma}[thm]{Lemma}
\newcommand{\R}{\mathbb{R}}
\newcommand{\p}{\mathfrak{p}}
\newcommand{\E}{\mathbb{E}}
\newcommand{\F}{\mathbb{F}}
\newcommand{\Z}{\mathbb{Z}}
\newcommand{\Q}{\mathbb{Q}}
\newcommand{\Hom}{\mathrm{Hom}}
\newcommand{\Mod}[1]{\ (\mathrm{mod}\ #1)}
\begin{document}

\maketitle
\thispagestyle{empty}
\footnotetext[1]{The author is grateful to Melanie Matchett Wood for direct mentorship and suggestion of the problem. The author thanks Yifeng Huang, Roger Van Peski, and the anonymous referee for very helpful comments on an earlier draft of the paper. The author was supported by the Harvard College Research Program during this research.}

\begin{abstract}
We use the moment method of Wood to study the distribution of random finite modules over a countable Dedekind domain with finite quotients, generated by taking cokernels of random $n\times n$ matrices with entries valued in the domain. Previously, Wood found that when the entries of a random $n\times n$ integral matrix are not too concentrated modulo a prime, the asymptotic distribution (as $n\to\infty$) of the cokernel matches the Cohen and Lenstra conjecture on the distribution of class groups of imaginary quadratic fields. We develop and prove a condition that produces a similar universality result for random matrices with entries valued in a countable Dedekind domain with finite quotients. % When the entries of the random matrix are not too concentrated, the cokernel has an asymptotic distribution (as $n\to \infty$) that resembles the Cohen and Lenstra conjecture on the distribution of class groups of imaginary quadratic fields. This result aligns with a previous result by Wood regarding the asymptotic distribution of cokernels of random integral matrices. The conditions on random matrices with entries valued in a Dedekind domain needed in order to achieve a Cohen-Lenstra type distribution are necessarily stricter than those needed for random integral matrices, but there still exist simple canonical examples of random matrix distributions that produce the desired distribution on cokernels.% The conditions on random matrices with entries valued in a Dedekind domain needed in order to achieve a Cohen-Lenstra type distribution are necessarily stricter than those needed for random integral matrices, but there still exist simple canonical examples of random matrix distributions that produce the desired distribution on cokernels.
\end{abstract}

\section{Introduction}
Wood shows in \cite{Woo19} that given any $\epsilon>0$, a random matrix $M(n)\in M_{n\times n}(\Z_p)$, with entries that lie any residue class of $\Z/p\Z$ with probability at most $1-\epsilon$, has a cokernel asymptotically distributed according to the distribution of finite abelian groups that Cohen and Lenstra conjecture \cite{CL84} to be the distribution of class groups of imaginary quadratic fields. More precisely, it was proven that for any finite abelian $p$-group $B$,
\begin{equation} \label{Zdist}
\lim_{n\to\infty}\mathbb{P}(\mathrm{cok}(M(n))\simeq B)=\frac{\prod_{k=1}^\infty(1-p^{-k})}{|\mathrm{Aut}(B)|} 
\end{equation}
when the entries $M(n)_{i,j}$ are independent and satisfy $\mathbb{P}(M(n)_{i,j}\equiv r\Mod p)\leq 1-\epsilon$ for all $r\in\Z/p\Z.$ Such random matrices $M(n)\in M_{n\times n}(\Z_p)$ satisfying this condition are called $\epsilon$-\emph{balanced}.
Random finite abelian groups can be generated by taking the quotient of a number of generators by some random relations. As such, this result regarding cokernels of random matrices is a universality result, demonstrating that the asymptotic distribution for random finite abelian groups obtained by generators and random relations is quite insensitive to the distribution of the random relations chosen (so long as they are not too concentrated in some residue class of some $\Z/p\Z$), and that this distribution matches that of Cohen-Lenstra.

This paper extends the universality of the Cohen-Lenstra distribution to more general random finite modules given by generators and random relations. The moments and distribution of random modules have rich connections with the Cohen-Lenstra-Martinet conjectures \cite{CL84,CL90} on the distribution of class groups of number fields, as explored in \cite{WW21}.

Similar to the case of finite abelian groups, a random finite module can be generated by taking the cokernel of a random $n\times n$ matrix with entries valued in some ring $T.$ A key question that needs to be answered when extending the universality result is what conditions need to be imposed on the matrix entries. We will answer this question in the case of Dedekind domains $T$ for which the quotient $T/J$ by any nonzero proper ideal $J\subset T$ is finite.

In the case of random modules over Dedekind domains, the direct application of the $\epsilon$-balanced assumption given in the random integral matrix case fails to produce a distribution similar to the one given in (1). In particular, one could ask why we cannot apply directly the existing definition of $\epsilon$-balanced, which is that for every maximal ideal $\p\subset T,$ we have $P(M(n)_{ij}\equiv r \Mod{\mathfrak{p}})\leq 1-\epsilon$ for all $r\in T/\p.$ If we take $T$ to be the ring of integers $\mathcal{O}_K$ for some number field $K,$ such a definition would imply that a $\{0,1\}$ matrix has a cokernel distribution similar to (1). Closer inspection reveals that this cannot be true. Suppose our matrix $M(n)$ is valued in $M_{n\times n}(\Z[i]).$ Then if $M(n)$ is a random $\{0,1\}$ matrix (or any matrix supported by only integers), it is invariant under the Galois action $\tau:i\mapsto -i.$ Thus $\mathrm{cok}(M(n))$ must be invariant under $\tau,$ severely restricting the set of $\Z[i]$-modules that it can possibly be isomorphic to. In other words, for any $\Z[i]$-module $N$ that is not invariant under $\tau,$ we have that \[\mathbb{P}(\mathrm{cok}(M(n))\simeq N)=0.\]

More generally, if $T=\mathcal{O}_K$ is the ring of integers of a number field $K,$ a matrix $M(n)\in M_{n\times n}(T)$ supported only on values that are invariant under some nontrivial element $\tau\in\mathrm{Gal}(K/\Q)$ cannot possibly have a cokernel distribution that is similar to (1). A sufficient condition on the entries of $M_{n\times n}(T)$ that determines that distribution of its cokernel is described in the following theorem.

\begin{thm}
Let $T$ be the ring of integers for some number field $K$ of degree $d,$ equipped with the discrete $\sigma$-algebra. Let $T_{\p}=\varprojlim T/\p^n.$ Let $\epsilon>0.$ Let $M(n)$ be a random matrix valued in $M_{n\times n}(T)$ with independent entries $M(n)_{i,j},$ such that, for every rational prime $p,$ we have $\mathbb{P}(M(n)_{i,j}\in H\Mod {p})\leq 1-\epsilon$ for every proper affine subspace $H\subset T/pT.$ Let $P=\{\p_1,\dots,\p_k\}$ be a finite set of nonzero prime ideals of $T,$ and let $T_{\p_i}=\varprojlim T/\p_i^n$ for all $i.$ Let $N$ be a finite module over $T$ annihilated by a product of powers of elements of $P.$ Then \[\lim_{n\to\infty}\mathbb{P}\left(\mathrm{cok}(M(n))\otimes \prod_{i=1}^kT_{\p_i}\simeq N\right)=\frac{1}{|\mathrm{Aut}(N)|}\prod_{i=1}^k\prod_{j=1}^\infty(1-|T/\mathfrak{p}_i|^{-j}).\]

\end{thm}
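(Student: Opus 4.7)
The plan is to apply the moment method of Wood \cite{Woo19}: compute the surjection moment
\[
\mathbb{E}\big[\#\mathrm{Sur}_T(\mathrm{cok}(M(n)),N)\big]
\]
for every finite $T$-module $N$ annihilated by a product of powers of primes in $P$, show that each such moment converges to $1$ as $n\to\infty$, and then deduce the stated pointwise convergence of the cokernel distribution via a moment-determines-distribution theorem for finite $T_P$-modules, where $T_P:=\prod_{i=1}^k T_{\mathfrak{p}_i}$. Because $N$ is annihilated by an ideal supported on $P$, any $T$-homomorphism out of $\mathrm{cok}(M(n))$ into $N$ factors through the completion, so the moment coincides with the $T_P$-moment of $\mathrm{cok}(M(n))\otimes T_P$, which is what the theorem actually studies.

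The first step is the standard rewriting
\[
\mathbb{E}\big[\#\mathrm{Sur}_T(\mathrm{cok}(M(n)),N)\big]=\sum_{F\in\mathrm{Sur}_T(T^n,N)}\prod_{j=1}^n\mathbb{P}(Fv_j=0),
\]
where $v_1,\ldots,v_n$ are the independent columns of $M(n)$. Since $N$ is annihilated by a product of powers of primes lying over finitely many rational primes $p$, each event $Fv_j=0$ depends only on $v_j$ modulo some fixed rational integer, so after applying the Chinese Remainder Theorem I can analyze one rational prime $p$ at a time.

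The heart of the argument is classifying surjections $F$ according to how ``spread out'' the reduction $\bar F\colon (T/pT)^n\to N/pN$ is, and bounding $p_F:=\mathbb{P}(Fv_j=0)$ accordingly. For ``generic'' $F$ I would show $p_F\approx |N|^{-1}$, so that a direct count yields $\sum_{\text{generic }F}p_F^n\to 1$, which is the expected Cohen--Lenstra moment. For non-generic $F$ the event $Fv_j=0$ forces some coordinate of $v_j$ into a proper $\mathbb{F}_p$-affine subspace of $T/pT$; the hypothesis then supplies $p_F\le 1-\epsilon$, giving exponential decay in $n$ that beats the polynomial count of non-generic $F$ within each defect class. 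I expect this classification to be the main obstacle: the correct analogue of Wood's ``code'' invariant over $\mathbb{F}_p$ must simultaneously track, for each rational prime $p$ and each $\mathfrak{p}_i\mid p$, the $\mathbb{F}_p$-affine geometry of $T/pT$. This is precisely why the affine-subspace form of $\epsilon$-balanced is essential: a Galois-invariant matrix over $\mathcal{O}_K$ (for example a $\{0,1\}$-matrix in $M_{n\times n}(\mathbb{Z}[i])$) is concentrated on a proper $\mathbb{F}_p$-affine subspace of $T/pT$ whenever $p$ is inert or split, so the naive residue-class form of $\epsilon$-balanced fails to penalize it, whereas the affine form does.

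Once all relevant moments are shown to tend to $1$, a $T_P$-module adaptation of Wood's moment-determines-distribution theorem \cite{Woo19} identifies the limit distribution as the unique measure on finite $T_P$-modules with this moment sequence, namely $N\mapsto|\mathrm{Aut}(N)|^{-1}\prod_i\prod_j(1-|T/\mathfrak{p}_i|^{-j})$, completing the proof.
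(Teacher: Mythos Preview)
Your overall plan matches the paper's: compute the surjection moments $\mathbb{E}[\#\mathrm{Sur}_T(\mathrm{cok}(M(n)),N)]$, show they converge to $|N|^{-u}=1$, and then invoke a moment-to-distribution theorem. The rewriting as a sum over $F\in\mathrm{Sur}(T^n,N)$ of products of column probabilities, and the code/non-code dichotomy, are exactly what the paper does. Your motivational remark about the $\{0,1\}$-matrix in $\mathbb{Z}[i]$ is also the paper's motivation for the affine-subspace form of $\epsilon$-balanced.

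There is, however, a genuine gap in your treatment of the non-code contribution. You write that for non-generic $F$ one gets $p_F\le 1-\epsilon$, and that this ``exponential decay in $n$ beats the polynomial count of non-generic $F$ within each defect class.'' The count is not polynomial: for $F$ of $\delta$-depth $D>1$ the number of such $F$ is of order $\binom{n}{\ell(D)\delta n}|N|^n D^{-n}$ (the paper's Lemma~2.8), which is exponential in $n$. The bound $p_F^n\le(1-\epsilon)^n$ alone therefore gives a non-code contribution of order $|N|^n D^{-n}(1-\epsilon)^n$, which blows up whenever $|N|>D/(1-\epsilon)$. What actually works is the depth-refined estimate (the paper's Lemma~2.9): for $F$ of $\delta$-depth $D$ one shows
\[
\mathbb{P}(FX=0)\le (1-\epsilon_R)\bigl(D|N|^{-1}+\exp(-\epsilon_R\delta n/|R|^2)\bigr),
\]
by splitting the column $X$ along the small bad set $\sigma$ (which supplies the factor $1-\epsilon_R$ via your affine-subspace hypothesis) and noting that $F|_{V_\sigma}$ is a genuine code of distance $\delta n$ into the index-$D$ submodule $H=F(V_\sigma)$ (which supplies $|H|^{-1}=D|N|^{-1}$). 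Only with this combined bound do the factors $|N|^n$ and $D^{\pm n}$ cancel against the count, leaving a net $(1-\epsilon_R)^n$-type decay. This is the step where ``some coordinate lands in a proper affine subspace'' is used, but it must be paired with the code estimate on the complement, not used in isolation.

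A smaller point: for the final step the paper does not adapt the moment theorem of \cite{Woo19} directly but invokes the module-level result of Sawin--Wood (the paper's Theorem~3.1, i.e.\ Lemma~6.3 of \cite{SW22}), which handles finite $S$-modules for $S=\prod_i T/\mathfrak{p}_i^{n_i+1}$ and computes the Ext/Hom correction explicitly. You should plan to cite that rather than rederive it.
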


We will see in section 3 that these probabilities sum to 1. %Additionally, the proof of the theorem relies on the computation $\lim_{n\to\infty}\mathbb{E}(\#\mathrm{Sur}(\mathrm{cok}(M(n)),N))=\frac{1}{|N|}$ for any $T$-module $N$ (see Theorem 2.12). Letting $N$ be the
A simple example of a random matrix with entries in $\mathcal{O}_K$ for some number field $K$ that has a cokernel satisfying the above distribution is the following: let $\{a_1,\dots,a_d\}$ be an integral basis of $\mathcal{O}_K$, and let $0<p_0,p_1,\dots,p_d<1$ be such that $p_0+\cdots+p_d=1.$ Then let $M(n)$ be the random matrix with independent entries that take the value 0 with probability $p_0$ and $a_i$ with probability $p_i.$ Despite having entries concentrated on just a small finite set $\{0,a_1,\dots,a_d\},$ these $M(n)$ have a cokernel with the same distribution as matrices that are much more equidistributed in the quotients $\mathcal{O}_K/p\mathcal{O}_K.$ When $K=\Q,$ this example reduces to the statement that a random matrix $M(n)$ with independent entries distributed $M(n)_{ij}\sim \mathrm{Bern}(q)$ for some $q>0$ has a cokernel distributed according to (\ref{Zdist}).

The need for stronger assumptions is apparent over many other rings as well. If we consider random matrices with entries valued in some function field $K[x]$ with $\mathrm{char}(K)\neq 2,$ any constant matrix will be invariant under the nontrivial isomorphism $\tau: x\mapsto -x$, which again restricts the cokernel of such a matrix to those that are invariant under $\tau.$ One can even allow the matrix entries to be supported on many more values, such as all even polynomials in $x,$ and the same problem would arise. Therefore, the exact conditions needed to arrive at a Cohen-Lenstra type distribution on the cokernels of random matrices over more general Dedekind domains must be more complex than simply counting the number of residues achieved in each quotient. The precise statement of the main condition on matrix entries used in this paper is as follows.

\begin{defn}
(a) Let $V$ be an $\mathbb{F}_p$-vector space and $y$ be a random variable valued in V. Then $y$ is $\mathbf{\epsilon}$\textbf{-balanced} if $P(y \in H)\leq1-\epsilon$ for every proper affine $\mathbb{F}_p$-subspace $H\subset V$. A random vector or random matrix with entries in $V$ is $\mathbf{\epsilon}$\textbf{-balanced} if the entries are independent and $\epsilon$-balanced.

(b) Let $T$ be a countable Dedekind domain such that $T/J$ is finite for all nonzero ideals $J\subset T,$ equipped with the discrete $\sigma$-algebra. Let $\mathcal{I}$ denote the set of all nonzero ideals of $T$ such that $T/I$ is an elementary abelian group, i.e. that $T/I\simeq (\Z/p\Z)^k$ for some prime $p$ and integer $k>0$, as an abelian group. Let $\epsilon\in \R_{>0}^{\mathcal{I}}$ and write $\epsilon=(\epsilon_I)_{I\in\mathcal{I}}.$ Then a random variable $y$ valued in $T$ is $\mathbf{\epsilon}$\textbf{-balanced} if for every $I\in\mathcal{I},$ the image of $y$ in $T/I$ is an $\epsilon_I$-balanced random variable. A random vector or random matrix with entries in $T$ is $\epsilon$-balanced if the entries are independent and $\epsilon$-balanced.
\end{defn}

Motivation for the definition is provided in section 2. Using this definition, we have the following result for countable Dedekind domains for which the quotient by any nonzero proper ideal is finite.

%On the one hand, the above example and the assumption made in Theorem 1.1  suggest some similarity between the cases of random integral matrices and random matrices with entries valued in some ring of integers. After all, Theorem 1.1 reduces to the same statement as the one given in \cite{Woo19} when considering the ring of integers $\Z.$ In both cases, the condition on the distributions of the matrix entries depends on the residue classes mod $p$ for rational primes $p.$ Furthermore, the simplest class of random matrices with cokernels satisfying the desired distribution has independent entries supported on 0 and a generating set of the ring as an abelian group. This example turns out to be applicable to more general classes of rings, and we will see why this is the case later in the paper.

\begin{thm}
Let $T$ be a countable Dedekind domain for which the quotient by any nonzero proper ideal is finite, equipped with the discrete $\sigma$-algebra. Let $P=\{\p_1,\dots,\p_k\}$ be a finite set of nonzero prime ideals of $T.$ Let $N$ be a finite module over $T$ annihilated by a product of powers of elements of $P.$ Let $u$ be a positive integer and $M(n)$ be an $\epsilon$-balanced $n\times (n+u)$ random matrix with entries valued in $T$, for some $\epsilon\in\R^{\mathcal{I}}_{>0}$, where $\mathcal{I}$ is the set of all nonzero ideals of $T$ such that $T/I$ is an elementary abelian group. Then:
\[\lim_{n\to\infty}\mathbb{P}\left(\mathrm{cok}(M(n))\otimes \prod_{i=1}^kT_{\p_i}\simeq N\right)=\frac{1}{|\mathrm{Aut}(N)||N|^u}\prod_{i=1}^k\prod_{j=1}^\infty (1-|T/\p_i|^{-u-j}).\]
\end{thm}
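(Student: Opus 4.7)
The plan is to adapt the moment method of Wood: first prove that, for every finite $T$-module $N$ annihilated by a product of powers of the $\p_i$,
\[
\lim_{n\to\infty}\mathbb{E}\bigl[\#\mathrm{Surj}\bigl(\mathrm{cok}(M(n)),N\bigr)\bigr]=\frac{1}{|N|^u},
\]
and then invoke a moment-to-distribution result for $T$-module-valued random variables (analogous to Wood's for $\Z_p$, which should be established earlier in the paper). Any surjection from $\mathrm{cok}(M(n))$ onto such an $N$ factors through $\mathrm{cok}(M(n))\otimes\prod_i T_{\p_i}$, so these moments are unchanged under the tensor, and the product formula in the conclusion is exactly the distribution with these surjection moments.

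By linearity of expectation, independence of the columns $c_1,\dots,c_{n+u}$ of $M(n)$, and the identification $\Hom(\mathrm{cok}(M(n)),G)=\{F\in\Hom(T^n,G):F\cdot c_j=0\text{ for all }j\}$,
\[
\mathbb{E}\bigl[\#\Hom(\mathrm{cok}(M(n)),G)\bigr]=\sum_{F\in\Hom(T^n,G)}\prod_{j=1}^{n+u}\mathbb{P}(F\cdot c_j=0)
\]
for every finite $T$-module $G$ annihilated by some power of $J=\p_1\cdots\p_k$. An inclusion–exclusion over submodules then converts these $\Hom$-moments into the $\mathrm{Surj}$-moments targeted above, so it suffices to understand the displayed sum as $n\to\infty$. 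Since $G$ is $J^\ell$-torsion for some $\ell$, each $F$ factors through $(T/J^\ell)^n$, so only finitely many elementary abelian subquotients of $T$ enter the analysis.

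The central estimate is a per-column bound of the form: for every $F\in\Hom(T^n,G)$ whose image fails to be surjective modulo some elementary abelian quotient of $G$,
\[
\mathbb{P}(F\cdot c=0)\leq \frac{1-\delta}{|\mathrm{image}(F)|},
\]
with $\delta=\delta(\epsilon)>0$ depending only on the balanced parameters. This is exactly where Definition 1.2(b) is designed to be used: if $F$ fails to be surjective onto some quotient $T/I$ with $T/I\simeq(\Z/p\Z)^k$, then $F\cdot c$ modulo $I$ lies in a proper affine $\F_p$-subspace, so the $\epsilon_I$-balanced condition on the entries of $c$ forces a uniform savings in the probability of hitting zero. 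Iterating the bound across the $n+u$ columns yields a factor $(1-\delta)^{n+u}$ that dominates the $|G|^n$ polynomial count of such $F$, so non-surjective $F$ contribute negligibly. The surjective $F$ give an exact $1/|G|$ per column by character orthogonality on the relevant elementary abelian pieces, and after counting they produce the main term $|G|^{-u}$.

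The main obstacle is proving this per-column estimate with enough uniformity to cover the full filtration of $G$. Definition 1.2(b) only directly controls the distribution of entries modulo ideals $I\in\mathcal{I}$, whereas $G$ may have arbitrary $\p_i$-depth. The remedy is to bootstrap along the filtration $G\supset\p_iG\supset\p_i^2G\supset\cdots$: each successive quotient $\p_i^rG/\p_i^{r+1}G$ is a finite-dimensional $T/\p_i$-vector space, hence elementary abelian as an abelian group, and the annihilator ideal is in $\mathcal{I}$, so Definition 1.2(b) applies level by level. The bookkeeping needed to combine these layerwise bounds across the different primes $\p_1,\dots,\p_k$ and to verify that the resulting sum over $F$ converges to the claimed moment is where the bulk of the technical work will lie. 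Once the moments are pinned down to $|N|^{-u}$, the moment-determining result converts them into the product formula stated in the theorem.
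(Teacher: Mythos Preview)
Your overall strategy---compute surjection moments, then invoke a moment-to-distribution theorem---matches the paper. But the heart of your proposal, the per-column dichotomy between surjective and non-surjective $F$, does not work, and both of the key estimates you state are false.

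First, surjective $F$ do \emph{not} give $\mathbb{P}(F\cdot c=0)=1/|G|$ exactly. Character orthogonality would give this only if the entries of $c$ were Haar-uniform, which $\epsilon$-balanced does not imply. Worse, a surjective $F$ can be extremely degenerate: take $G=T/\mathfrak p$ and $F(v_1)=1$, $F(v_i)=0$ for $i>1$. Then $\mathbb{P}(F\cdot c=0)=\mathbb{P}(c_1\equiv 0\bmod\mathfrak p)$, which for a $\{0,1\}$-Bernoulli entry is $1/2$, not $1/|T/\mathfrak p|$. Such $F$ are surjective, so your dichotomy places them in the ``main term'' bucket, but they are far from equidistributed. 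Second, your claimed bound $\mathbb{P}(F\cdot c=0)\le(1-\delta)/|\mathrm{image}(F)|$ for non-surjective $F$ is also false for the same reason: if $F$ is supported on a single coordinate, the probability can be of constant order, not $O(1/|\mathrm{image}(F)|)$. Your proposed mechanism---``$F\cdot c$ modulo $I$ lies in a proper affine subspace, so $\epsilon_I$-balancedness gives a savings''---conflates a condition on the \emph{entries} $c_i$ with a condition on the \emph{output} $F\cdot c$; the former does not transfer to the latter in the way you describe.

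The missing idea is that the right dichotomy is not surjective versus non-surjective but \emph{robustly} surjective versus not: one needs the notion of a code of distance $\delta n$ (an $F$ that remains surjective after deleting any $\delta n$ coordinates). For codes, a Fourier argument over the additive group of $R=T/aT$ shows $\mathbb{P}(F\cdot c=0)=|G|^{-1}+O(e^{-cn})$; here the $\epsilon$-balanced hypothesis enters by showing, for each nonzero additive character $C$ of $G$, that $C(F(v_j)c_j)$ is $\epsilon_R$-nonconstant for at least $\delta n$ indices $j$, which in turn uses that the kernel of $x\mapsto C(F(v_j)x)$, pushed to an elementary-abelian quotient of $R$, is a proper subspace. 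Non-codes are then handled not by a per-column savings but by a \emph{counting} argument: one stratifies them by $\delta$-depth $D$, bounds the number of depth-$D$ maps by roughly $\binom{n}{\ell(D)\delta n}|G|^n D^{-n}$, and pairs this with a cruder probability bound of order $(1-\epsilon_R)^n D^n|G|^{-n}$ coming from the observation that at least one coordinate must land in a fixed residue class modulo some $\mathfrak p$. The product of count and probability is what decays, not a per-column factor alone. Your filtration idea in the last paragraph is in the right spirit for the Fourier step but does not substitute for this code/depth structure.
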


We require $T$ to be countable so we can use the discrete $\sigma$-algebra and avoid measurability concerns. We can also allow for matrices $M(n)$ with entries valued in $T_\p=\varprojlim T/\p^n$ for some prime ideal $\p\subset T,$ using the Borel $\sigma$-algebra generated by the $\p$-adic topology, in which case an analagous statement holds.

Considering modules over polynomial rings $\mathbb{F}_p[x],$ this result implies that we can sample the entries of $M(n)$ independently from some given distribution supported on $\{1,x,x^2,\dots\},$ and arrive at the asymptotic cokernel distribution given above. 

Given any maximal ideal $\p\subset T,$ we have that $\mathrm{cok}(M(n))\otimes T/\p$ is the cokernel of $M$ reduced mod $\p,$ so the entries can be viewed as valued in a finite field. As such, the results in this paper can be seen as an enhancement of the long history of work on singularity and ranks of the random integral matrices considered over finite fields, including the results of \cite{CRR90,KK01,KL75,Koz66,Map10}. In particular, the notion of ``not too concentrated" that our $\epsilon$-balanced condition on our random matrices ensures is closely resembles that of \cite{KK01}.

We now give an overview of the paper. To determine the distribution of the cokernels described above, we first compute their moments, and then apply results from $\cite{SW22}$ to compute the distribution. This technique has been used in many papers determining universality properties of random groups, including \cite{Lee22, Mes20,NP22, NW21,NW22, Woo17, Woo19}. We are interested in the moments $\mathbb{E}(\#\mathrm{Sur}(\mathrm{cok}(M(n)),N))$ for any $T$-module $N$, and we will find these by proving inverse Littlewood-Offord theorems in section 2. Despite our end result being a statement about certain random modules (viewed as cokernels random matrices with entries in a Dedekind domain $T$), in the proofs of the inverse Littlewood-Offord theorems, we will make extensive use of the underlying abelian group structure of $T$ and its quotients. The abelian group structure is in fact the primary motivation behind our precise formulation of the notion that the matrix entries cannot be ``too concentrated." Finally, in section 3, to determine the cokernel distribution, we apply universality results from \cite{SW22} that utilize the aforementioned moments.

\subsection{Notation and Terminology}
All vector spaces in this paper will be considered over $\mathbb{F}_p$ for some rational prime $p,$ i.e. even if a space is a vector space over some $\mathbb{F}_q,$ where $q=p^k$ for some positive integer $k,$ we will still consider it as a vector space over $\F_p.$ The term \emph{proper affine subpace} of a vector space $V$ over $\F_p$  refers to a subset $H\subset V$ such that $H=a+H'$ for a proper subspace $H'\subset V,$ as a vector subspace over $\mathbb{F}_p$, and some $a\in V.$ Additionally, the cokernel of a matrix $M\in M_{n\times n}(R)$, denoted $\mathrm{cok}(M)$, is the $R$-module $R^n/M(R^n).$ For any $R$-modules $A,B$, write $\mathrm{Hom}(A,B)$ and $\mathrm{Sur}(A,B)$ to denote the set of $R$-module homomorphisms and surjective homomorphisms from $A$ to $B$, respectively. Write $\mathrm{Hom}_\Z(A,B)$ and $\mathrm{Sur}_\Z(A,B)$ to denote the set of abelian group homomorphisms and surjective abelian group homomorphisms from $A$ to $B,$ respectively. Finally, write $[n]$ to denote $\{1,\dots,n\}.$

\section{Computing the Moments}

Let $T$ be a Dedekind domain such that $T/J$ is finite for all nonzero ideals $J\subset T$. If $T$ is countable, equip it with the discrete $\sigma$-algebra. If $T$ is not countable, assume it is the completion $T=\varprojlim S/\p^n$ for some maximal ideal $\p$ of a Dedekind domain $S$ such that $S/J$ is finite for all nonzero ideals $J\subset S$. In this case, equip $T$ with the Borel $\sigma$-algebra generated by the $\p$-adic topology.

Our high level strategy in this section, including notions of codes and depth, follows \cite{Woo19}, but many details require new ideas for this general case.

In the case of random integral matrices, it was required that the matrix entries be independent and ``not too concentrated." Definition 1.2 continues to impose the conditions that the matrix entries be independent, and extends the notion of ``not too concentrated" to countable Dedekind domains.

The definition departs from the notion of $\epsilon$-balanced in the case of random integral matrices in a couple of subtle but significant ways. The distribution of cokernels of random integral matrices was calculated for matrices whose entries were $\epsilon$\emph{-balanced} for some $\epsilon$, i.e. that $\mathbb{P}(y\equiv r\Mod \p)\leq 1-\epsilon$ for every maximal ideal $\p\subset \Z$ and every $r\in \Z/\p.$ In other words, each entry need only be supported by two values in $T/\p$ for every maximal ideal $\p$.

In more general cases, as shown earlier, merely assuming that the matrix entries are supported on two values in $T/\p$ for every maximal ideal $\p\subset T$ is not sufficient to ensure the resulting matrix has a cokernel following the distribution given in Theorem 1.3, hence the more robust assumption given in the definition. For an ideal $I$ such that $T/I$ is a $\F_p$-vector space for some rational prime $p$, our assumption implies that an $\epsilon$-balanced random variable $y$ valued in $T$ is supported on at least $\dim_{\F_p}(T/I)+1$ values when projected down to $T/I$. In other words, the support of $y$ consists of at least $\dim_{\F_p}(T/I)+1$ values, no pair of which are congruent modulo $I.$ However, since the dimension of $T/I$ can become arbitrarily large as $I$ varies, we cannot make the assumption that a single $\epsilon$ determine the degree to which our random variable can be concentrated in some $T/I$ regardless of which $I$ is chosen. Therefore our measure of how ``unconcentrated" our random variable is, $\epsilon$, must vary with $I$, which is why we have defined it as an element of $\R_{\geq0}^{\mathcal{I}}.$
%Recall that for any rational prime $p\in\Z$, we have that $T/p$ is a $\Z/p\Z$-vector space of degree $d,$ i.e. $T/p\simeq (\Z/p\Z)^d$ as an abelian group. Given any vector space $V$, we say that $H\subset V$ is an affine subspace of dimension $n$ if $H=H'+\alpha$ for some subspace $H'\subset V$ of dimension $n$ and constant $\alpha\in T/p.$

We will study random matrices with entries valued in $T$ by reducing them mod $a$, for all nonzero, non-unit elements $a\in T.$ Throughout this section we will work with the following objects. Let $a\in T$ be nonzero and non-unit, and let $R=T/aT.$ We will study random matrices with entries valued in $R$. These reductions of $T$-valued random matrices are sufficient to establish our results because for a random matrix $M(n),$ as defined in the introduction, we will determine the asymptotic distribution of the reductions $\mathrm{cok}(M(n))\otimes \prod_{i=1}^kT_{\p_i}$ (for any fixed finite set of primes $\{\p_1,\dots,\p_k\}$), which can be studied by reducing the entries of $M(n)$ first. Since $R$ is finite, there is a finite set $\mathcal{I}$ of ideals $I$ such that $R/I$ is an elementary abelian group. Thus, when considering random variables valued in $R,$ it makes sense to adapt our notion of $\epsilon$-balanced slightly, as follows.

\begin{defn}
Let $T$ be a countable Dedekind domain such that $T/J$ is finite for all nonzero ideals $J\subset T.$ Let $R=T/aT$ for some nonzero, nonunit $a\in T.$ Let $\epsilon_R>0$ be a real number. A random variable $y$ valued in $R$ is $\mathbf{\epsilon_R}$\textbf{-balanced} if, for every ideal $I$ such that $R/I$ is an elementary abelian group, the image of $y$ in $R/I$ is an $\epsilon_R$-balanced random variable.
\end{defn}

In this section, we consider an $\epsilon_R$-balanced random matrix $M$ with entries in $R.$ Additionally, fix a non-negative integer $u$ and study random $n\times (n+u)$ matrices $M$ with entries valued in $R.$ Let $M_1,\dots,M_{n+u}$ be the columns of $M$ (which are random vectors valued in $R^n$) and $M_{ij}$ be the entries of $M.$ Let $V=R^n$ with standard basis $v_i$ and $W=R^{n+u}$ with standard basis $w_j.$ For every $\sigma\subset [n]$, denote by $V_\sigma$ the distinguished submodule generated by the $v_i$ with $i\notin \sigma.$ Additionally, view every matrix $M$ as an element of $\Hom(W,V),$ and its columns $M_j$ as elements of $V$ so that $M_j=Mw_j=\sum_iM_{ij}v_i.$ Let $N$ be a finite $R$-module (equivalently, a $T$-module such that $aN=0$). We have $\mathrm{cok}(M)=V/M(W).$

To investigate the moments $\mathbb{E}(\#\mathrm{Sur}(\mathrm{cok}(M),N))$, observe that each such surjection lifts to a surjection $V\to N,$ so we have that \[\mathbb{E}(\#\mathrm{Sur}(\mathrm{cok}(M),N))=\sum_{F\in\mathrm{Sur}(V,N)}\mathbb{P}(F(M(W))=0).\]

Since $M$ is $\epsilon_R$-balanced, the columns are independent and we have \[\mathbb{P}(F(M(W))=0)=\prod_{j=1}^{n+u}\mathbb{P}(F(M_j)=0).\]

We will estimate the probabilities $\mathbb{P}(F(M_j)=0).$ We will separate the surjections $F$ into cases and establish estimates in each case separately. The first set of surjections $F$ we consider satsifies the following property.

\begin{defn}
Given an integer $n\geq 1,$ let $V=R^n$. Let $N$ be a finite $R$-module. We say that $F\in \Hom_R(V,N)$ is a \textbf{code of distance} $w$ if for every $\sigma\subset[n]$ with $|\sigma|<w,$ we have $FV_\sigma=N.$
\end{defn}

We introduce the following notion that will be helpful for Lemmas 2.4 and 2.5.

\begin{defn}
A random variable $y$ valued in a finite ring $S$ is $\mathbf{\epsilon}$\textbf{-nonconstant} if $\mathbb{P}(y=s)\leq 1-\epsilon$ for all $s\in S.$ 
\end{defn}

\begin{lemma}
Let $N$ be a finite $R$-module. Let $\delta>0$ be a real number. Let $n$ be a positive integer. Let $X$ be an $\epsilon_R$-balanced random vector valued in $V=R^n.$ Let $F\in\Hom(V,N)$ be a code of distance $\delta n$ and let $A$ be an element of $N.$ Then  \[\left|\mathbb{P}(FX=A)-|N|^{-1}\right|\leq\exp(-\epsilon_R\delta n/|R|^2).\]
\end{lemma}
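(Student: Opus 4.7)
The plan is Fourier inversion on the finite abelian group $N$. By the orthogonality of characters,
\[
\mathbb{P}(FX = A) - |N|^{-1} = \frac{1}{|N|} \sum_{\chi \in \hat N \setminus \{1\}} \chi(-A)\, \mathbb{E}[\chi(FX)],
\]
and independence of the entries $X_i$ factors this expectation as $\prod_{i=1}^n \mathbb{E}[\psi_{i,\chi}(X_i)]$, where the additive character $\psi_{i,\chi}\colon R \to \mathbb{C}^*$ is defined by $\psi_{i,\chi}(r) := \chi(rF(v_i))$. The task therefore reduces to (i) showing that many $\psi_{i,\chi}$ are nontrivial on $R$, and (ii) bounding each nontrivial factor $|\mathbb{E}[\psi_{i,\chi}(X_i)]|$ away from $1$.

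Step (i) uses the code hypothesis directly. Setting $T_\chi := \{i : \psi_{i,\chi} \equiv 1\}$, for each $i \in T_\chi$ the character $\chi$ annihilates the $R$-submodule $R \cdot F(v_i)$, so it annihilates $\langle F(v_i) : i \in T_\chi \rangle = FV_{[n] \setminus T_\chi}$. Since $\chi$ is nontrivial on $N$, this submodule is a proper submodule of $N$, so the code-of-distance-$\delta n$ condition forces $|[n] \setminus T_\chi| \geq \delta n$. Thus at least $\delta n$ of the characters $\psi_{i,\chi}$ are nontrivial on $R$ for any nontrivial $\chi$.

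For step (ii)---the main obstacle---I would invoke (or prove, as Lemma 2.4) the per-factor estimate $|\mathbb{E}[\psi(Y)]| \leq 1 - \epsilon_R/|R|^2$ for any nontrivial additive character $\psi$ of $R$ and any $\epsilon_R$-balanced $Y$. The difficulty is that $\ker\psi$ is merely a subgroup of $R$, not generally an ideal, while the $\epsilon_R$-balanced hypothesis controls the distribution only on elementary abelian \emph{ideal} quotients. The natural bridge is to pick a prime $p$ dividing the order $m$ of $\psi$; then $\psi^{m/p}$ is a nontrivial character of order $p$ factoring through the elementary abelian $p$-group $R/pR$, on which $Y \bmod pR$ is $\epsilon_R$-balanced by hypothesis. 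The fibers of $\psi^{m/p}$ on $R/pR$ are cosets of a hyperplane, i.e.\ proper affine subspaces, each carrying mass at most $1 - \epsilon_R$. A convexity computation on $\mu_p$ (worst case: mass $1 - \epsilon_R$ on one $p$-th root of unity and $\epsilon_R$ on an adjacent one) yields $|\mathbb{E}[\psi^{m/p}(Y)]|^2 \leq 1 - 4\epsilon_R(1-\epsilon_R)\sin^2(\pi/p)$, and the inequalities $\sin(\pi/p) \geq 2/p$ and $p \leq |R|$ give the required shape; a short additional step (for instance, iterating over the primes dividing $m$) then transfers the bound from $\psi^{m/p}$ back to $\psi$ itself.

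Finally, combining everything, for every nontrivial $\chi$,
\[
|\mathbb{E}[\chi(FX)]| \leq \prod_{i:\psi_{i,\chi}\neq 1} |\mathbb{E}[\psi_{i,\chi}(X_i)]| \leq (1 - \epsilon_R/|R|^2)^{\delta n} \leq \exp(-\epsilon_R \delta n / |R|^2),
\]
and summing against the prefactor $1/|N|$ over the at most $|N|-1$ nontrivial characters yields the stated estimate.
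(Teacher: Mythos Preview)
Your overall architecture---Fourier inversion on $N$, factoring over columns, and the code argument in step (i)---is correct and matches the paper's proof. The gap is in step (ii). Having bounded $|\E[\psi^{m/p}(Y)]|$ away from $1$, there is no way to ``transfer'' this to a bound on $|\E[\psi(Y)]|$: these are Fourier coefficients of $Y$ at two different characters of $R$ and carry independent information. For instance, with $R=\Z/4\Z$, $\psi(y)=i^y$, and $Y$ uniform on $\{0,1\}$ (which is $\tfrac12$-balanced), one has $\E[\psi^{2}(Y)]=0$ while $|\E[\psi(Y)]|=1/\sqrt{2}$. Iterating over primes dividing $m$ does not rescue this, for the same reason.

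The fix is simply to swap the order of two of your steps, and this is exactly what the paper does. Your own observation that the fibres of $\psi^{m/p}$ on $R/pR$ are affine hyperplanes of mass at most $1-\epsilon_R$ already shows that $\psi^{m/p}(Y)$ is $\epsilon_R$-nonconstant; since $\{\psi(Y)=\omega\}\subset\{\psi^{m/p}(Y)=\omega^{m/p}\}$, this forces $\psi(Y)$ itself to be $\epsilon_R$-nonconstant. \emph{Now} apply the convexity bound (the paper's Lemma~2.5) directly to $\psi(Y)$ rather than to $\psi^{m/p}(Y)$: since the order $m$ of $\psi$ divides the additive exponent of $R$ and hence is at most $|R|$, this yields $|\E[\psi(Y)]|\le\exp(-\epsilon_R/m^2)\le\exp(-\epsilon_R/|R|^2)$ with no transfer step needed. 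In the paper's phrasing: embed $\ker\psi$ in a maximal subgroup $M$ of $R$ (your $\ker\psi^{m/p}$ is exactly such an $M$), use that $R\to R/M\simeq\Z/\ell\Z$ factors through $R/\ell R$ to deduce $\epsilon_R$-nonconstancy, and only then invoke Lemma~2.5 on the full character.
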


The proof of this lemma uses the discrete Fourier transform and the following helpful estimate, which is almost identical to Lemma 2.2 in \cite{Woo19}, so we will omit the proof.

\begin{lemma}[Lemma 2.2 from \cite{Woo19}]
Let $\epsilon>0$ be a real number and $m\geq2$ an integer. Let $\zeta$ be a primitive $m$th root of unity. Let $y$ be an $\epsilon$-nonconstant random variable valued in $\Z/m\Z.$ Then $|\E(\zeta^{y})|\leq\exp(-\epsilon/m^2).$
\end{lemma}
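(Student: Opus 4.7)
The plan is to expand $|\E(\zeta^y)|^2$ as a real bilinear form in the pointwise probabilities $p_s := \mathbb{P}(y=s)$, extract a clean identity for the defect $1 - |\E(\zeta^y)|^2$ as a sum of squared chord lengths, and bound that sum from below by a quantity of order $\epsilon/m^2$. The exponential form of the conclusion then follows from $1-x \le e^{-x}$.

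First, I would write $\E(\zeta^y) = \sum_{s} p_s\zeta^s$ and compute
\[|\E(\zeta^y)|^2 = \sum_{s,t \in \Z/m\Z} p_s p_t\,\zeta^{s-t}.\]
Since the left side is real, I take real parts, and pair each term with its conjugate partner $(t,s)$. Combined with $\sum_{s,t} p_s p_t = 1$ and the elementary identity
\[1 - \mathrm{Re}(\zeta^{s-t}) = \tfrac{1}{2}|\zeta^s - \zeta^t|^2,\]
this yields the pivotal identity
\[1 - |\E(\zeta^y)|^2 \;=\; \sum_{s<t} p_s p_t\,|\zeta^s - \zeta^t|^2.\]

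Next I would supply two lower bounds for the right-hand side. For each distinct pair $s\ne t$ in $\Z/m\Z$, $|\zeta^s - \zeta^t| = 2|\sin(\pi(s-t)/m)|$, minimized by $s-t \equiv \pm 1$ and so at least $2\sin(\pi/m)$. Since $m\ge 2$, the standard inequality $\sin x \ge 2x/\pi$ on $[0,\pi/2]$ gives $\sin(\pi/m)\ge 2/m$, hence $|\zeta^s - \zeta^t|^2 \ge 16/m^2$. For the probability weights, $\sum_{s<t} p_s p_t = \tfrac{1}{2}(1 - \sum_s p_s^2)$, and the $\epsilon$-nonconstant hypothesis $\max_s p_s \le 1-\epsilon$ feeds into $\sum_s p_s^2 \le (\max_s p_s)\sum_s p_s \le 1-\epsilon$, so $\sum_{s<t} p_s p_t \ge \epsilon/2$.

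Combining the two bounds, $1 - |\E(\zeta^y)|^2 \ge 8\epsilon/m^2$, so $|\E(\zeta^y)|^2 \le 1 - 8\epsilon/m^2 \le \exp(-8\epsilon/m^2)$, and therefore $|\E(\zeta^y)| \le \exp(-4\epsilon/m^2) \le \exp(-\epsilon/m^2)$, which is the desired bound (with constant room to spare). There is no serious obstacle here; the only mild subtlety is translating the $\epsilon$-nonconstant assumption — which controls only the \emph{maximum} mass — into a lower bound on the off-diagonal weight $\sum_{s<t} p_s p_t$, for which the inequality $\sum_s p_s^2 \le \max_s p_s$ is exactly the right tool. Everything else is a standard trigonometric estimate plus $1-x \le e^{-x}$.
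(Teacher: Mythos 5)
The paper does not actually prove this lemma --- it states it as Lemma~2.2 of \cite{Woo19} and says ``so we will omit the proof'' --- so there is no in-paper argument to compare against. That said, your proof is correct and complete, and it is the standard route: compute
\[
1 - |\E(\zeta^y)|^2 \;=\; \sum_{s,t} p_s p_t\bigl(1 - \operatorname{Re}\zeta^{\,s-t}\bigr) \;=\; \sum_{s<t} p_s p_t\,|\zeta^s - \zeta^t|^2,
\]
then bound the chord lengths below by $2\sin(\pi/m)\ge 4/m$ (Jordan's inequality, valid since $m\ge 2$ puts $\pi/m \in [0,\pi/2]$) and the off-diagonal weight below via $\sum_s p_s^2 \le \max_s p_s \le 1-\epsilon$, so $\sum_{s<t} p_s p_t \ge \epsilon/2$, and finish with $1 - x \le e^{-x}$. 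Every step checks out, and the constant you obtain ($\exp(-4\epsilon/m^2)$) is stronger than the stated $\exp(-\epsilon/m^2)$. One remark worth internalizing: the identity $|\E(\zeta^y)|^2 = \E(\cos(2\pi(y-y')/m))$ for an independent copy $y'$ is the probabilistic repackaging of your double sum, and $\mathbb{P}(y\ne y')\ge \epsilon$ is the repackaging of your $\sum_{s<t}p_sp_t \ge \epsilon/2$; this phrasing is sometimes how the argument is presented and may match the source more closely, but it is the same proof.
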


\begin{proof}[Proof of Lemma 2.4]
Let $m=|R|$ and $\zeta$ be a primitive $m$th root of unity. By the discrete Fourier transform, we have that
\begin{align*}
\mathbb{P}(FX=A)
&=|N|^{-1}\sum_{C\in\Hom_\Z(N,\Z/m\Z)}\E\left(\zeta^{C(FX-A)}\right)\\
&= |N|^{-1}+|N|^{-1}\sum_{C\in\Hom_\Z(N,\Z/m\Z)\backslash \{0\}}\E(\zeta^{C(-A)})\prod_{1\leq j\leq n}\E(\zeta^{C(F(v_j)X_j)}).
\end{align*}

Fix some $C\in \Hom_\Z(N,\Z/m\Z)\setminus \{0\}.$ Our goal is to show there are at least $\delta n$ values of $j$ such that $C(F(v_j)X_j)$ is an $\epsilon_R$-nonconstant variable in $\Z/m\Z$, after which the bound in Lemma 2.5 implies the deisred result immediately. To do so, for each $j$ we will view the mapping $x\mapsto C(F(v_j)x)$ as a mapping from $R\to \Z/m\Z$ and examine its kernel.

For each $j$, note that we have an $R$-homomorphism $\psi_j:R\to N$ given by $\psi_j(1)=F(v_j),$ so we can define a $\Z$-homomorphism  $\varphi_j : R\to \Z/m\Z$ given by $\varphi_j=C\circ\psi_j.$ Thus $\varphi_j(X_j)=C(F(v_j)X_j).$ We claim that there must be at least $\delta n$ values of $j$ such that $\varphi_j$ is a nonzero map. Suppose this is not the case. Then there is some $\sigma\subset[n]$ with $|\sigma|<\delta n$ such that $\varphi_j=0$ for all $j\notin \sigma.$ Therefore $\mathrm{Im}(\psi_j)\subset \ker(C)$ for all $j\notin \sigma,$ so $F(V_\sigma)\subset \ker(C)$. But $C$ is nonzero, so $\ker(C)$ is a proper subgroup of $N$, so $F(V_\sigma)$ is a proper subgroup of $N$, contradicting our assumption that $F$ is a code.

It now suffices to show that for $j$ such that $\varphi_j$ is nonzero, $\varphi_j(X_j)=C(F(v_j)X_j)$ is an $\epsilon_R$-nonconstant random variable valued in $\Z/m\Z$. Let $S=\ker(\varphi_j).$ Because $\varphi_j$ is nonzero, $S$ is a proper subgroup of $R$, so $S$ must be contained in some maximal proper subgroup $M$ of $R.$ Because $M$ is maximal, we have by the correspondence theorem that $R/M\simeq \Z/\ell \Z$, as an abelian group, for some rational prime $\ell.$ Thus we have a quotient map $\pi:R\mapsto \Z/\ell\Z.$ Because $R/S\simeq \mathrm{Im}(\varphi_j)$ and $S\subset M,$ it suffices to show that $\pi(X_j)$ is an $\epsilon_R$-nonconstant variable in $\Z/\ell\Z$. Observe that $\pi$ factors through $R/\ell R$ because $\ell R \subset\ker(\pi)$, so we have a map $\overline{\pi}:R/\ell R\to\Z/\ell\Z.$ Thus for every proper affine subspace $H$ of $R/\ell R$, we have that $\mathbb{P}(\overline{X_j}\in H)\leq 1-\epsilon_R$, where  $\overline{X_j}$ is the image of $X_j$ under the projection $R\mapsto R/\ell R$. Since $\pi(X_j)=\overline{\pi}(\overline{X_j})$, and $\ker(\overline{\pi})$ is proper subspace of $R/\ell R$, this implies that $\pi(X_j)$ is an $\epsilon_R$-nonconstant in $\Z/\ell\Z.$

%The image of $X_j$ under the quotient map $R\to R/S$ is a random variable valued in $R/S$. we are done if we show that $X_j$ has an $\epsilon_R$-nonconstant image when projected onto $R/S.$ 

Thus we have shown that at least $\delta n$ values of $j$ such that $C(F(v_j)X_j)$ is an $\epsilon_R$-nonconstant variable in $\Z/m\Z$. We can then apply Lemma 2.5 to conclude: 
\[\left|\E(\zeta^{C(-A)})\prod_{1\leq j\leq n}\E(\zeta^{C(F(v_j)X_j)})\right|\leq \exp(-\epsilon_R\delta n/m^2)\] for $C\in\Hom_\Z(G,\Z/m\Z)\backslash \{0\}.$ The result follows.
\end{proof}

We can put the estimates for columns together as follows:

\begin{lemma}
Let $R,V,N$ be as above. Let $u$ be a non-negative integer. Let $\epsilon_R,\delta>0$ be real numbers. Then there are real numbers $c,K>0$ depending on $J,N,u,\epsilon_R,$ and $\delta,$ such that for every positive integer $n,$ every $\epsilon_R$-balanced random matrix $M$ valued in $\Hom_R(W,V),$ every code $F\in\Hom_R(V,N)$ of distance $\delta n$, and every $A\in \Hom_R(W,N)$, we have \[|\mathbb{P}(FM=A)-|N|^{-n-u}|\leq\frac{K\exp(-cn)}{|N|^{n+u}}\] where $W=R^{n+u}.$
\end{lemma}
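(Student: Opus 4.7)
The plan is to deduce this from Lemma 2.4 applied column-by-column and then bound the resulting product. Since $M$ is $\epsilon_R$-balanced, the columns $M_1,\dots,M_{n+u}$ are independent random vectors in $V=R^n$, so conditioning on $F$ gives
\[
\mathbb{P}(FM=A)=\prod_{j=1}^{n+u}\mathbb{P}(FM_j=A(w_j)).
\]
Lemma 2.4, applied with $X=M_j$ and the fixed element $A(w_j)\in N$, yields
\[
\bigl|\mathbb{P}(FM_j=A(w_j))-|N|^{-1}\bigr|\leq \eta,\qquad \eta:=\exp(-\epsilon_R\delta n/|R|^2),
\]
uniformly in $j$.

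The next step is a standard telescoping argument to pass from per-column closeness to product closeness. Set $a_j:=\mathbb{P}(FM_j=A(w_j))$ and $b_j:=|N|^{-1}$. Writing $a_j=|N|^{-1}(1+\varepsilon_j)$ with $|\varepsilon_j|\leq |N|\eta$, we have
\[
\Bigl|\prod_{j=1}^{n+u}a_j-|N|^{-(n+u)}\Bigr|
= |N|^{-(n+u)}\Bigl|\prod_{j=1}^{n+u}(1+\varepsilon_j)-1\Bigr|
\leq |N|^{-(n+u)}\bigl((1+|N|\eta)^{n+u}-1\bigr).
\]
For $n$ large enough (depending only on $|N|,u,\epsilon_R,\delta$), we have $(n+u)|N|\eta\leq 1$, and then the elementary bound $(1+x)^m-1\leq 2mx$ for $0\leq mx\leq 1$ gives
\[
(1+|N|\eta)^{n+u}-1\leq 2(n+u)|N|\eta.
\]
Since $(n+u)\eta=(n+u)\exp(-\epsilon_R\delta n/|R|^2)$ is $O(\exp(-cn))$ for any $c<\epsilon_R\delta/|R|^2$, we obtain the stated bound with $K,c$ depending only on $R,N,u,\epsilon_R,\delta$ (the small-$n$ range is absorbed into the constant $K$ since the bound is trivial when $\mathbb{P}(FM=A)\leq 1$ and $|N|^{n+u}$ is bounded there).

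The only real subtlety is ensuring that the linear factor $n+u$ produced by the telescoping does not destroy the exponential rate. This is handled by the observation that $\eta$ decays exponentially in $n$ while $(n+u)$ grows only polynomially, so a small sacrifice in the exponential rate absorbs the polynomial overhead. Everything else is bookkeeping: independence reduces the global question to a single-column estimate, Lemma 2.4 supplies that estimate uniformly in the target $A(w_j)\in N$, and the telescoping bound converts uniform additive per-factor control into uniform relative control on the product.
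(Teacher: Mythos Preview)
Your argument is correct and follows exactly the approach the paper indicates: apply Lemma 2.4 column-by-column using independence, then combine the per-column bounds via elementary inequalities about real numbers (this is precisely what the paper means when it says the passage ``required only inequalities about real numbers'' and refers to Lemma 2.4 of \cite{Woo19}). The telescoping/product bound you wrote out is the standard way to carry this out, and your handling of the polynomial factor $n+u$ against the exponential decay $\eta$, together with absorbing the finitely many small $n$ into $K$, is exactly right.
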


Lemma 2.6 can be shown by the same argument as Lemma 2.4 in \cite{Woo19}, as going from estimates for columns to estimates for matrices required only inequalities about real numbers.

We now need to deal with $F$ that are not codes of distance $\delta n$. Such $F\in \Hom_R(V,N)$ can be categorized based on the largest submodule of $N$ they are a code for of distance at least $\delta n$. Note that $V_\sigma$ is a submodule of $V$ for any $\sigma$, so $F(V_\sigma)$ is a submodule of $N.$ For a positive integer $D$ with prime factorization $D=\prod_ip_i^{e_i}$ define $\ell(D)=\sum_i e_i.$ 

\begin{defn}
For a real $\delta>0$, the $\mathbf{\delta}$\textbf{-depth} of an $F\in\Hom_R(V,N)$ is the maximal positive integer $D$ such that there is a subset $\sigma\subset[n]$ with $|\sigma|<\ell(D)\delta n$ such that $D=[N:F(V_\sigma)]$, or is $1$ if there is no such $D.$
\end{defn}

Observe that when the $\delta$-depth of $F$ is 1, we have that for all $\sigma\subset [n]$ satisfying $|\sigma|<\delta n,$ $F(V_\sigma)=G,$ implying that $F$ is a code of distance $\delta n.$ It turns out that there are far fewer non-codes than codes. In fact, we can bound the number of $F\in\Hom_R(V,G)$ that are of $\delta$-depth $D$ using the following.

\begin{lemma}
Let $V,N$ be as above. There is a constant $K,$ depending on $a$ and $G,$ such that for all positive integers $n$, all integers $D>1,$ and all real numbers $\delta>0,$ the number of $F\in\Hom_R(V,G)$ of $\delta$-depth $D$ is at most \[K\binom{n}{\lceil \ell(D)\delta n\rceil-1}|G|^n|D|^{-n+\ell(D)\delta n}.\] 
\end{lemma}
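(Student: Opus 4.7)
The plan is to upper-bound the number of $F \in \Hom_R(V, N)$ of $\delta$-depth $D > 1$ by associating to each such $F$ a witnessing pair $(\sigma, H)$, where $\sigma \subset [n]$ has size exactly $k - 1$ (with $k = \lceil \ell(D)\delta n \rceil$) and $H \subseteq N$ is a submodule of index $D$ containing $F(V_\sigma)$. A union bound over these pairs then produces the desired estimate.

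First I would verify that such a pair exists for every $F$ of $\delta$-depth $D$. By the definition of depth there is some $\sigma_0 \subset [n]$ with $|\sigma_0| < \ell(D)\delta n$, hence $|\sigma_0| \leq k-1$, and $F(V_{\sigma_0}) = H$ of index $D$ in $N$. Assuming $k - 1 \leq n$, I extend $\sigma_0$ to a superset $\sigma$ of size exactly $k - 1$; since $V_\sigma \subseteq V_{\sigma_0}$ (being generated by fewer of the $v_i$), we get $F(V_\sigma) \subseteq H$. The degenerate case $k - 1 > n$ forces $\ell(D)\delta > 1$, and I would handle it separately, either via the trivial count $|N|^n$ or by noting that the depth of any $F$ is a priori bounded by $|N|$.

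Next I would count homomorphisms $F$ compatible with a fixed pair $(\sigma, H)$. Such $F$ are determined by the $n$ values $F(v_i)$, with $F(v_i) \in H$ forced for $i \notin \sigma$ and $F(v_i) \in N$ free for $i \in \sigma$, giving exactly $|H|^{n-(k-1)} |N|^{k-1} = |N|^n D^{(k-1)-n} \leq |N|^n D^{-n + \ell(D)\delta n}$ choices (using $|H| = |N|/D$ and $k - 1 \leq \ell(D)\delta n$). Summing over the $\binom{n}{k-1}$ subsets $\sigma$ of size $k - 1$ and the submodules $H$ of index $D$ (of which there are at most the total number $K_N$ of submodules of $N$, a constant depending only on $N$, and hence on $a$) yields the claimed bound with $K = K_N$.

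The main conceptual step I expect is the monotonicity observation $\sigma \supseteq \sigma_0 \Rightarrow F(V_\sigma) \subseteq F(V_{\sigma_0})$ that permits fixing $|\sigma| = k - 1$; without it, one is pushed into a sum over all smaller sizes of $\sigma$ together with a geometric factor in $D$, which complicates the bookkeeping and introduces a spurious factor that is hard to absorb into a constant. Everything else is routine combinatorics, with the harmless caveat that a single $F$ may be counted by several valid pairs $(\sigma, H)$ — this only strengthens the upper bound.
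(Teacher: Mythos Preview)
Your proof is correct and is exactly the counting argument the paper invokes by citing Lemma~5.2 of Wood's earlier work; the paper's own proof consists solely of that citation plus the remark that submodules are subgroups, so the bound carries over verbatim. The only loose end is the degenerate regime $k-1>n$: neither of your proposed fixes actually yields the stated bound there (the right-hand side collapses to zero while $F$'s of depth $D$ can still exist), but this is an artifact of the lemma's phrasing rather than a flaw in your argument, and the case never occurs in the application because $\delta$ is chosen with $\ell(|N|)\delta<1$.
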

\begin{proof}
This is the same as Lemma 5.2 in \cite{Woo17}, replacing subgroups with submodules. Since submodules are also subgroups, this bound still holds.
\end{proof}

For each $\delta$-depth, we obtain the following estimate.

\begin{lemma}
Let $V,N$ be as above. If $F\in\Hom_R(V,N)$ has $\delta$-depth $D>1$ and $[N:F(V)]<D,$ then for all $\epsilon_R$-balanced random vectors $X$ valued in $V,$ \[\mathbb{P}(FX=0)\leq (1-\epsilon_R)(D|N|^{-1}+\exp(-\epsilon_R\delta n/|R|^2)).\]
\end{lemma}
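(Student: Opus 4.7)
The plan is to exploit the structure of the subset $\sigma \subset [n]$ realizing the $\delta$-depth, split the random vector $X$ along this subset, and combine an application of Lemma 2.4 (to the "code" part) with an $\epsilon_R$-balancedness estimate (to the "non-code" part). Concretely, fix $\sigma \subset [n]$ with $|\sigma| < \ell(D)\delta n$ and $[N : F(V_\sigma)] = D$, let $N' := F(V_\sigma)$, and decompose $X = X' + X''$ where $X' = \sum_{i \in \sigma} X_i v_i$ and $X'' = \sum_{i \notin \sigma} X_i v_i$. Since $FX'' \in N'$, the event $FX = 0$ forces $FX' \in N'$, so
\[
\mathbb{P}(FX = 0) = \mathbb{E}\bigl[\mathbf{1}_{FX' \in N'}\,\mathbb{P}(FX'' = -FX' \mid X')\bigr].
\]

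First I would establish two preliminary claims. (i) The restriction $F|_{V_\sigma} \colon V_\sigma \to N'$ is itself a code of distance $\delta n$: if some $\tau \subset [n]\setminus\sigma$ with $|\tau| < \delta n$ gave $F(V_{\sigma\cup\tau}) \subsetneq N'$ with index $D' > 1$ in $N'$, then $[N : F(V_{\sigma\cup\tau})] = DD'$ while $|\sigma\cup\tau| < \ell(D)\delta n + \delta n \leq \ell(DD')\delta n$, contradicting maximality of the depth. (ii) There exists $i^* \in \sigma$ with $F(v_{i^*}) \notin N'$: otherwise every generator of $F(V)$ lies in $N'$, forcing $[N:F(V)] \geq [N:N'] = D$ and contradicting the hypothesis $[N:F(V)] < D$. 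Claim (i) lets us invoke Lemma 2.4 on $F|_{V_\sigma}$ (with $N'$ in place of $N$), yielding uniformly in $X'$
\[
\mathbb{P}(FX'' = -FX' \mid X') \leq |N'|^{-1} + \exp(-\epsilon_R \delta n / |R|^2) = D/|N| + \exp(-\epsilon_R \delta n / |R|^2).
\]

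It remains to show $\mathbb{P}(FX' \in N') \leq 1 - \epsilon_R$, and this is where claim (ii) plus the $\epsilon_R$-balancedness reduction at the end of the proof of Lemma 2.4 enter. Condition on $X_i$ for $i \in \sigma\setminus\{i^*\}$; then $FX' \in N'$ becomes the single equation $\overline{F(v_{i^*})}\,X_{i^*} = -\bar C$ in $N/N'$, whose solution set (if nonempty) is a coset of $\ker(\rho)$ for the map $\rho \colon R \to N/N'$, $r \mapsto F(v_{i^*})r \bmod N'$. Because $F(v_{i^*}) \notin N'$, $\rho$ is a nonzero $\mathbb{Z}$-homomorphism, so $\ker(\rho)$ is a proper subgroup of $R$; enclose it in a maximal subgroup $M$ with $R/M \simeq \mathbb{Z}/\ell\mathbb{Z}$, factor the projection $\pi \colon R \to R/M$ through $R/\ell R$, and observe that the preimage in $R/\ell R$ of any single element of $\mathbb{Z}/\ell\mathbb{Z}$ is a coset of $\ker(\bar\pi)$, hence a proper affine $\mathbb{F}_\ell$-subspace. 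Since $\ell R \in \mathcal{I}$, $\epsilon_R$-balancedness gives $\mathbb{P}(FX' \in N' \mid X_i, i \in \sigma\setminus\{i^*\}) \leq 1 - \epsilon_R$, and averaging yields the same bound unconditionally. Combining with the previous estimate produces the claimed inequality.

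I expect the main obstacle to be claim (i), the assertion that $F|_{V_\sigma}$ is a code of distance $\delta n$ into $N'$: it is the place where the specific additive behavior of $\ell$ on prime factorizations interacts with the maximality of $D$, and it is also what allows us to re-use Lemma 2.4 as a black box rather than having to re-do the Fourier argument. The remaining work — the conditioning decomposition and the reduction of $\epsilon_R$-balancedness against a coset of $\ker(\rho)$ to balancedness against a proper affine subspace of $R/\ell R$ — is a direct adaptation of techniques already developed in the proof of Lemma 2.4.
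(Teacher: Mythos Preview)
Your proposal is correct and follows essentially the same approach as the paper: choose $\sigma$ realizing the depth, factor $\mathbb{P}(FX=0)$ into the probability that the $\sigma$-part lands in $N'=F(V_\sigma)$ times a conditional probability handled by Lemma~2.4, and verify that $F|_{V_\sigma}$ is a code of distance $\delta n$ and that some $F(v_{i^*})\notin N'$. The one minor divergence is in bounding $\mathbb{P}(FX'\in N')\le 1-\epsilon_R$: the paper invokes the structure theorem for finite modules over a Dedekind domain to reduce to a single residue class modulo some prime $\mathfrak{p}_\ell$, whereas you recycle the maximal-subgroup/$R/\ell R$ argument from the proof of Lemma~2.4 --- both work, and yours has the virtue of not introducing a second reduction mechanism.
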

\begin{proof}
The proof is the direct analog of the proof of Lemma 2.7 in \cite{Woo19} for random modules, but we include it for completeness. Let $V$ have standard basis $v_i,$ and pick $\sigma\subset[n]$ with $|\sigma|<\ell(D)\delta n$ such that $D=[N:F(V_\sigma)].$ Let $F(V_\sigma)=H.$ Since $[N:F(V)]<D$, the set $\sigma$ is non-empty. We have that:
\begin{align*}
    \mathbb{P}(FX=0)&=\mathbb{P}\left(\sum_{j\in\sigma}F(v_j)X_j\in H\right)\\
    &\times \mathbb{P}\left(\sum_{j\notin\sigma}F(v_j)X_j=-\sum_{j\in\sigma}F(v_j)X_j\;\middle|\; \sum_{j\in\sigma}F(v_j)X_j\in H\right).
\end{align*}

%For the first factor, we consider the probability:\[\mathbb{P}\left(\sum_{j\in\sigma}(F(v_j)Y_j+iF(v_j)Z_j)\in H\right).\] 

We claim that the first factor $\mathbb{P}\left(\sum_{j\in\sigma}F(v_j)X_j\in H\right)\leq1-\epsilon.$ Since $[N:F(V)]<D,$ there must be some $i\in\sigma$ with the reduction $F(v_i)\neq 0\in N/H.$ Suppose we condition on all other $X_k$ for $k\neq i.$ Since $N/H$ is a also finite $T$-module and $T$ is a Dedekind domain, we can write $N/H=T/\p_1^{e_1}\times\cdots\times T/\p_k^{e_k}$ for some prime ideals $\p_j\in T$ by the classification of modules over a Dedekind domain. Treating each $F(v_j)$ as an element of $T/\p_1^{e_1}\times\cdots\times T/\p_k^{e_k},$ we can write  $F(v_i)=(a_1,\dots,a_k)$, where $a_j\in T/\p_j^{e_j}$ for all $j$. Additionally, there is some $\ell$ such that $a_\ell\neq0$ since $F(v_i)\neq0.$ Then given all $X_k$ for $k\neq i,$ in order for $\sum_{j\in\sigma}F(v_j)X_j$ to be 0 in $N/H,$ we have that $F(v_i)X_i$ is a uniquely determined nonzero element of $N/H$, so $a_\ell X_i$ is a uniquely determined element of $T/\p_\ell^{e_\ell}.$ It follows that $X_i$ can take on at most one value mod $\p_\ell$. The probability of this happening is at most $1-\epsilon_R$ since $X_i$ is $\epsilon_R$-balanced and every point is an affine subspace. Thus $$\mathbb{P}\left(\sum_{j\in\sigma}F(v_j)X_j\in H\right)\leq1-\epsilon_R$$ %Since $[G:F(V)]<D,$ there must be some $i\in\sigma$ with the reduction $F(v_i)\neq 0\in G/H.$ Suppose we condition on all other $X_k$ for $k\neq i.$ Since $G/H$ is a finite abelian group with exponent $q,$ we can write $G/H=\Z/q\times\Z/q\times\cdots\Z/q$ as an abelian group. For simplicity, assume $G/H=\Z/q$ and treat each $F(v_j)$ as an element of $\Z/q$. (The argument is analogous in the general case.) Then given all $X_k$ for $k\neq i,$ in order for $\sum_{j\in\sigma}F(v_j)X_j$ to be 0 in $G/H,$ we know that $F(v_i)Y_{1i}+a_1F(v_i)Y_{2i}+\cdots+a_{k-1}F(v_i)Y_{ki}$ can take on at most 1 value in $G/H.$ Call this value $w.$ Treating $F(v_i),a_1F(v_i),\dots,a_{k-1}F(v_i)$ as nonzero elements of $Z/q,$ this is a  linear form in $k$ variables valued in $Z/q.$ Thus in order for $\sum_{j\in\sigma}F(v_j)X_j$ to be 0 in $G/H,$ we want $F(v_i)Y_{1i}+a_1F(v_i)Y_{2i}+\cdots+a_{k-1}F(v_i)Y_{ki}=w.$ This equation can hold at no more than $p^{k-1}$ values of $(Y_i,Z_i)\in\Z/p\Z\times \Z/p\Z.$ But $X_i$ is $\epsilon$-balanced, so the equation can be satisfied with at most probability $1-\epsilon.$ Thus $$\mathbb{P}\left(\sum_{j\in\sigma}F(v_j)X_j\in H\right)\leq1-\epsilon.$$

For the second factor, note that the restriction of $F$ to $V_\sigma$ is a code of distance $\delta n$ in $\Hom_R(V_\sigma,H).$ It follows from Lemma 2.4 that \[\mathbb{P}\left(\sum_{j\notin\sigma}F(v_j)X_j=-\sum_{j\in\sigma}F(v_j)X_j\;\middle|\; \sum_{j\in\sigma}F(v_j)X_j\in H\right)\leq |H|^{-1}+\exp(-\epsilon_R\delta n/m^2).\]

The lemma follows.
\end{proof}

We can extend this estimate for vectors to an estimate for matrices as follows.
\begin{lemma}
Let $V,N$ be as above. Let $u$ be a non-negative integer. Let $\epsilon_R,\delta>0$ be real numbers. Then there is a real number $K$, depending on $a,N,u,\epsilon_R,\delta$, such that for every positive integer $n,$ every $\epsilon_R$-balanced random matrix $M$ valued in $\Hom(W,V),$ every $F\in \Hom_R(V,N)$ of $\delta$-depth $D>1$ with $[N:F(V)]<D$, we have \[\mathbb{P}(FM=0)\leq K\exp(-\epsilon_R n)D^n|N|^{-n}\] here $W=R^{n+u}.$
\end{lemma}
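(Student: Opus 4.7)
The key observation is that since $M$ has independent $\epsilon_R$-balanced entries, its columns $M_1, \dots, M_{n+u}$ are independent $\epsilon_R$-balanced random vectors in $V$, and $FM = 0$ holds exactly when $FM_j = 0$ for every $j \in [n+u]$. Independence then gives
\[\mathbb{P}(FM = 0) = \prod_{j=1}^{n+u} \mathbb{P}(FM_j = 0),\]
and applying Lemma 2.9 to each factor bounds this by $(a+b)^{n+u}$, where $a = (1-\epsilon_R) D |N|^{-1}$ and $b = (1-\epsilon_R)\exp(-\epsilon_R \delta n/|R|^2)$.

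The main analytic step is to show $(a+b)^{n+u} \leq K \exp(-\epsilon_R n) D^n |N|^{-n}$ for a constant $K$ depending only on the allowed parameters $a, N, u, \epsilon_R, \delta$. I would write $(a+b)^{n+u} = a^{n+u}(1 + b/a)^{n+u}$ and observe that $b/a = (|N|/D)\exp(-\epsilon_R\delta n/|R|^2)$ decays exponentially in $n$; in particular, for all $n \geq n_0$ with $n_0$ depending on $N, u, \epsilon_R, \delta, |R|$, we have $(n+u)(b/a) \leq 1$ and hence $(1 + b/a)^{n+u} \leq e$. Then using $1-\epsilon_R \leq e^{-\epsilon_R}$ together with $D \leq |N|$ (since $D$ is the index of a submodule of $N$, hence divides $|N|$), one computes
\[a^{n+u} \leq e^{-\epsilon_R (n+u)}(D/|N|)^{n+u} = e^{-\epsilon_R (n+u)}(D/|N|)^u D^n |N|^{-n} \leq e^{-\epsilon_R n} D^n |N|^{-n}.\]

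For the finitely many exceptional $n < n_0$, the trivial bound $\mathbb{P}(FM=0) \leq 1$ suffices once we observe that $\exp(-\epsilon_R n) D^n |N|^{-n}$ is bounded below by a positive constant on $\{1,\dots,n_0-1\}$ depending only on the fixed parameters; enlarging $K$ absorbs these cases uniformly. Given Lemma 2.9 the plan is essentially routine, and I expect the only minor obstacle to be tracking that the cross-terms in the binomial expansion of $(a+b)^{n+u}$ do not degrade the rate — this is what the exponential decay of $b/a$ is designed to handle.
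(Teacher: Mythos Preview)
Your proposal is correct and follows essentially the same route as the paper, which simply defers to the proof of Lemma 2.8 in \cite{Woo19}: apply Lemma 2.9 columnwise, use $1-\epsilon_R \le e^{-\epsilon_R}$, and control the extra exponential term coming from $\exp(-\epsilon_R\delta n/|R|^2)$ by absorbing it into the constant $K$ (your observation that $b/a$ decays fast enough that $(1+b/a)^{n+u}$ stays bounded is exactly the point). The dependency bookkeeping is also right, since $|R|$ depends on $a$ and $D$ ranges only over divisors of $|N|$, so both $n_0$ and the lower bound for $\exp(-\epsilon_R n)(D/|N|)^n$ on $\{1,\dots,n_0-1\}$ involve only the permitted parameters.
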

The proof of Lemma 2.10 is the same as the proof of Lemma 2.8 in \cite{Woo19}.

We can then conclude the following theorem.
\begin{thm}
Let $N$ be a finite $R$-module. Let $u$ be a non-negative integer. Let $\epsilon_R>0$ be a real number. Then there are $c,K>0$ depending on $a,N,u,\epsilon_R,$ such that for any positive integer $n,$ and any $\epsilon_R$-balanced random matrix $M$ with entries in $R,$ we have \[|\mathbb{E}(\#\mathrm{Sur}(\mathrm{cok}(M),N))-|N|^{-u}|\leq Ke^{-cn}.\]
\end{thm}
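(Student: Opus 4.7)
The plan is to start from
\[\mathbb{E}(\#\mathrm{Sur}(\mathrm{cok}(M),N)) = \sum_{F \in \mathrm{Sur}(V,N)} \mathbb{P}(FM = 0)\]
and split the sum according to whether $F$ is a code of distance $\delta n$, for a small $\delta > 0$ to be chosen at the end. By Definition 2.7 the codes of distance $\delta n$ are exactly the $F \in \mathrm{Hom}_R(V,N)$ of $\delta$-depth $1$, and any code is automatically a surjection (take $\sigma = \emptyset$, so $F(V_\emptyset) = F(V) = N$ once $\delta n \geq 1$). Thus the codes sit inside $\mathrm{Sur}(V,N)$ and their complement in $\mathrm{Hom}_R(V,N)$ is the union over $D > 1$ of the $F$ of $\delta$-depth $D$, which is controlled by Lemma 2.8.

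For the code contribution I apply Lemma 2.6 with $A = 0$ to obtain $\mathbb{P}(FM = 0) = |N|^{-n-u} + O(e^{-cn}/|N|^{n+u})$ for each code $F$, and sum to get
\[\sum_{F \text{ code}} \mathbb{P}(FM = 0) = \#\{\text{codes}\} \cdot |N|^{-n-u} + O\bigl(|N|^n \cdot e^{-cn}/|N|^{n+u}\bigr).\]
Writing $\#\{\text{codes}\} = |N|^n - \#\{\text{non-codes}\}$, the leading piece becomes $|N|^{-u} - \#\{\text{non-codes}\} \cdot |N|^{-n-u}$, so it suffices to show $\#\{\text{non-codes}\} \leq K e^{-cn} |N|^n$. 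Lemma 2.8 bounds the non-code count by a sum over $\delta$-depths $D > 1$; applying $\binom{n}{k} \leq (en/k)^k$ with $k \approx \ell(D)\delta n$ bounds the $D$-term by a constant times
\[\bigl(eD/(\ell(D)\delta)\bigr)^{\ell(D)\delta n} D^{-n} |N|^n.\]
Since $D$ divides $|N|$ (hence ranges over finitely many values), $D \geq 2$, and $(A/\delta)^\delta \to 1$ as $\delta \to 0^+$, taking $\delta$ small enough makes each base strictly less than $1$, so the code contribution is $|N|^{-u} + O(e^{-cn})$.

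For the non-code surjection contribution, every surjective $F$ satisfies $[N:F(V)] = 1 < D$, so Lemma 2.10 applies and gives $\mathbb{P}(FM = 0) \leq K e^{-\epsilon_R n} D^n |N|^{-n}$. Multiplying by the Lemma 2.8 count and applying the same binomial estimate, the contribution from depth $D$ is at most a constant multiple of
\[\bigl(eD/(\ell(D)\delta)\bigr)^{\ell(D)\delta n} e^{-\epsilon_R n}.\]
Choosing $\delta$ small enough that $\bigl(eD/(\ell(D)\delta)\bigr)^{\ell(D)\delta} < e^{\epsilon_R/2}$ for every divisor $D > 1$ of $|N|$ makes each such term at most $e^{-\epsilon_R n/2}$, and summing over the finitely many such $D$ gives $K e^{-cn}$. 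The main obstacle is bookkeeping: choosing a single $\delta$ (depending only on $|N|$, $u$, and $\epsilon_R$) that simultaneously makes both the non-code count negligible relative to $|N|^n$ and every per-depth probability estimate beat the combinatorial factor from Lemma 2.8. Once $\delta$ is fixed, combining the two pieces yields $\mathbb{E}(\#\mathrm{Sur}(\mathrm{cok}(M),N)) = |N|^{-u} + O(e^{-cn})$, as desired.
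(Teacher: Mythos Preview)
Your argument is correct and follows essentially the same approach as the paper: split $\mathrm{Sur}(V,N)$ into codes of distance $\delta n$ and non-codes, handle codes via Lemma~2.6, handle non-code surjections via Lemmas~2.8 and~2.10, and choose $\delta$ small enough (using that only finitely many $D\mid |N|$ occur) so that the combinatorial factor $\binom{n}{\lceil\ell(D)\delta n\rceil-1}D^{\ell(D)\delta n}$ is beaten by $D^{-n}$ and by $e^{-\epsilon_R n}$ respectively. Your bookkeeping is in fact slightly cleaner than the paper's: by writing $\#\{\text{codes}\}=|N|^n-\#\{\text{non-codes}\}$ and applying Lemma~2.8 directly to all non-codes in $\mathrm{Hom}_R(V,N)$, you avoid the paper's separate (and redundant) treatment of $\mathrm{Hom}(V,N)\setminus\mathrm{Sur}(V,N)$.
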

\begin{proof}
Let $W=R^{n+u}$, so we can view $M\in\Hom_R(W,V),$ so $\mathrm{cok}(M)=V/M(W).$ Since each surjection $\mathrm{cok}(M)\to N$ lifts to a surjection $V\to N,$ we have that \[\mathbb{E}(\#\mathrm{Sur}(\mathrm{cok}(M),N))=\sum_{F\in\mathrm{Sur}(V,N)}\mathbb{P}(F(M(W))=0).\]

Our goal is to bound the following:
\begin{align*}
    \left|\left(\sum_{F\in\mathrm{Sur}(V,N)}\mathbb{P}(FM=0)\right)-|N|^{-u}\right|&= \left|\left(\sum_{F\in\mathrm{Sur}(V,N)}\mathbb{P}(FM=0)\right)-\left(\sum_{F\in\Hom(V,N)}|N|^{-n-u}\right)\right|\\
    &\leq \sum_{\substack{F\in\mathrm{Sur}(V,N)\\ F \text{ code of distance }\delta n}}|\mathbb{P}(FM=0)-|N|^{-n-u}|\\&+\sum_{\substack{F\in\mathrm{Sur}(V,N)\\ F \text{ not code of distance }\delta n}}\mathbb{P}(FM=0)+\sum_{\substack{F\in\Hom(V,N)\\ F \text{ not code of distance }\delta n}}|N|^{-n-u}.
\end{align*}

We claim that each term can be bounded by $Ke^{-cn}$ for some constants $c,K$ depending only on $a,N,u,\epsilon_R,\delta,$ and $d.$ In the remainder of the proof, $K$ may change as needed so long as it only depends on $a,N,u,\epsilon_R,\delta,$ and $d.$

Before bounding each term, we pick some constants. Pick a real $d<\min(\epsilon_R,\log(2)).$ Given $N$ and $d,$ pick a real number $\delta>0$ sufficiently small such that there exists $K$, such that for all $n\in\Z^+$ we have \[\binom{n}{\lceil \ell(D)\delta n\rceil-1}|N|^{\ell(|N|)\delta n}\exp(-\epsilon_R n)\leq Ke^{-dn}\] and \[\binom{n}{\lceil \ell(D)\delta n\rceil-1}2^{-n+\ell(|N|)\delta n}\leq Ke^{-dn}.\]

The bounding of the first term \[\sum_{\substack{F\in\Hom(V,N)\\ F \text{ code of distance }\delta n}}|\mathbb{P}(FM=0)-|N|^{-n-u}|\leq Ke^{-cn}\] follows from Lemma 2.6. We can also establish the following:
\begin{align*}
\sum_{\substack{F\in\mathrm{Sur}(V,N)\\ F \text{ not code of distance }\delta n}}\mathbb{P}(FM=0)&=\sum_{\substack{D>1\\D|\#N}}\sum_{\substack{F\in\mathrm{Sur}(V,N)\\ F \text{ }\delta-\text{depth }D}}\mathbb{P}(FM=0)\\
&\leq \sum_{\substack{D>1\\D|\#N}}K\binom{n}{\lceil \ell(D)\delta n\rceil-1}|N|^nD^{-n+\ell(D)\delta n}\exp(-\epsilon_R n)D^n|N|^{-n}\\
&\leq\sum_{\substack{D>1\\D|\#N}}K\binom{n}{\lceil \ell(D)\delta n\rceil-1}D^{\ell(D)\delta n}\exp(-\epsilon_Rn)\\
&\leq K\binom{n}{\lceil \ell(D)\delta n\rceil-1}|N|^{\ell(|N|)\delta n}\exp(-\epsilon_R n)\\
&\leq Ke^{-dn}.
\end{align*}

This gives us the bound on the middle term.

For the last term, notice that
\begin{align*}
\sum_{\substack{F\in\mathrm{Sur}(V,N)\\ F \text{ not code of distance }\delta n}}|N|^{-n-u}&=\sum_{\substack{D>1\\D|\#N}}\sum_{\substack{F\in\mathrm{Sur}(V,N)\\ F \text{ }\delta-\text{depth }D}}|N|^{-n-u}\\
&\leq \sum_{\substack{D>1\\D|\#N}}K\binom{n}{\lceil \ell(D)\delta n\rceil-1}|N|^n|D|^{-n+\ell(D)\delta n}|N|^{-n}\\
&\leq K\binom{n}{\lceil \ell(D)\delta n\rceil-1}2^{-n+\ell(|N|)\delta n}\\
&\leq Ke^{-dn}.
\end{align*}

Additionally,
\begin{align*}
    \sum_{F\in\Hom(V,N)\setminus\mathrm{Sur}(V,N)}|N|^{-n-u}&\leq\sum_{H \text{ proper submodule of }N}\sum_{F\in\Hom(V,H)}|N|^{-n-u}\\
    &\leq \sum_{H \text{ proper submodule of }N}|H|^{n+u}|N|^{-n-u}\\
    &\leq Ke^{-dn}.
\end{align*}

The bound on the last term follows.
\end{proof}
\section{Distribution of Cokernels}
We will combine the results of Theorem 1.2 and Lemma 6.3 from $\cite{SW22}$ to determine the asymptotic distribution of $\mathrm{cok}(M(n))$ as $n\to \infty$ from the moments given in Theorem 2.12, as follows.
\begin{thm}[Lemma 6.3 from \cite{SW22}]
Let $R$ be a ring. Let $u$ be a real number and for each finite $R$-module $N$ let $M_N=|N|^{-u}$. Let $S$ be a finite quotient ring of $R$, and let $K_1,\dots, K_n$ be representatives of the isomorphism classes of finite simple S-modules. Let $q_1,\dots, q_n$ be the cardinalities of the endomorphism fields of $K_i$. Let $\{X_n\}$ be a sequence of finite $S$-modules. Suppose for every finite $S$-module $N$ we have $\lim_{n\to\infty}\mathbb{E}(\#\mathrm{Sur}(X_n,N))=M_N.$ Then \[\lim_{n\to \infty}\mathbb{P}(X_n\simeq N)=\frac{1}{|\mathrm{Aut}(N)||N|^u}\prod_{i=1}^n\prod_{j=1}^\infty\left(1-\frac{|\mathrm{Ext}_S^1(N,K_i)|}{|\mathrm{Hom}(N,K_i)||K_i|^u}|q_i|^{-j}\right).\]
\end{thm}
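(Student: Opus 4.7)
The plan is to treat this as an abstract moment-method statement about probability measures on the countable, discrete set of isomorphism classes of finite $S$-modules, independent of the random-matrix setup in which it is applied.

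First I would prove that any probability distribution $\mu$ on finite $S$-modules is determined by its surjection moments $N \mapsto \sum_G \mu(G)\,\#\mathrm{Sur}(G,N)$. The inversion rests on the identity $\#\mathrm{Hom}(G,N) = \sum_{H\subseteq N} \#\mathrm{Sur}(G,H)$: Möbius inversion on the finite poset of submodules of $N$ recovers $\#\mathrm{Sur}(G,N)$ from $\{\#\mathrm{Hom}(G,H)\}_{H\subseteq N}$, and from the full data $\{\#\mathrm{Hom}(G,N)\}_N$ one recovers the isomorphism class of $G$, since $G$ is finite.

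Next I would construct a candidate limit measure $\mu$ satisfying $\sum_G \mu(G)\,\#\mathrm{Sur}(G,N) = |N|^{-u}$ for every finite $S$-module $N$, and identify $\mu(\{N\})$ with the explicit product formula in the statement. A finite $S$-module decomposes into primary components, one for each maximal ideal $\mathfrak{m}_i$ whose residue field is the simple module $K_i$. I would define $\mu$ as an independent product over $i$ of Cohen-Lenstra--Hall-type measures on the corresponding primary part, parameterized by partitions over the endomorphism field $\mathbb{F}_{q_i}$, with normalizing constant $\prod_{j=1}^\infty (1-q_i^{-u-j})$ coming from the Hall--Littlewood identity $\sum_\lambda q^{-u|\lambda|}/|\mathrm{Aut}(P_\lambda)| = \prod_j (1-q^{-u-j})^{-1}$. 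Verifying that $\mu$ has the claimed surjection moments then reduces, via multiplicativity over primary components, to a computation inside each $\mathbb{F}_{q_i}$-Hall algebra; the $N$-dependent factors $|\mathrm{Ext}^1_S(N,K_i)|$ and $|\mathrm{Hom}(N,K_i)|$ enter because they control the number of extensions of $N$ by copies of $K_i$ that contribute to surjections from larger modules onto $N$.

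To conclude, I would establish tightness of $\{X_n\}$ on the discrete set of finite $S$-modules: since $\mathbb{E}(\#\mathrm{Sur}(X_n,N)) \to |N|^{-u} \to 0$ as $|N|\to\infty$, the masses $\mathbb{P}(X_n \simeq N)$ for large $N$ are uniformly small, so any subsequential weak limit of the laws of $X_n$ is a probability measure on finite $S$-modules with moments $|N|^{-u}$. The uniqueness step then forces this limit to equal $\mu$, giving $\mathbb{P}(X_n\simeq N) \to \mu(\{N\})$ as desired. The main obstacle will be the Hall-algebra identity in the middle step: for a single simple module (the case $S$ a discrete valuation ring) it reduces to the classical Cohen--Lenstra identity, but for general $S$ one must carefully disentangle the contribution of each simple $K_i$ and track how $\mathrm{Ext}^1_S(N,K_i)$ and $\mathrm{Hom}(N,K_i)$ enter, which is the source of the $N$-dependence in the final product.
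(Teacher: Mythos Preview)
The paper does not prove this statement at all: Theorem 3.1 is quoted verbatim as ``Lemma 6.3 from \cite{SW22}'' and used as a black box to pass from the moment computation (Theorem 2.11) to the distribution (Theorem 3.2). There is therefore no proof in the paper to compare your proposal against.

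Your sketch is a reasonable outline of how the Sawin--Wood result is actually established, but a couple of points deserve care if you intend to write it out. First, uniqueness from moments is not automatic: M\"obius inversion on the submodule lattice recovers $\#\mathrm{Sur}(G,N)$ from $\{\#\mathrm{Hom}(G,H)\}_{H\subseteq N}$, but to recover the \emph{distribution} of $G$ from the averaged quantities $\sum_G \mu(G)\#\mathrm{Sur}(G,N)$ you must justify exchanging the infinite sum over $G$ with the M\"obius inversion, which requires growth bounds on the moments (this is precisely what the ``well-behaved'' hypothesis in \cite{SW22} addresses, and why $M_N=|N|^{-u}$ with $u>0$ matters). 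Second, your tightness argument as written has a gap: the fact that $\mathbb{E}(\#\mathrm{Sur}(X_n,N))\to |N|^{-u}$ for each fixed $N$ does not by itself give uniform smallness of the tails $\sum_{|G|>R}\mathbb{P}(X_n\simeq G)$ in $n$; one typically shows instead that the candidate limit measure $\mu$ is a probability measure (your Hall--Littlewood normalization step) and then uses the moment convergence together with the inversion formula to force $\mathbb{P}(X_n\simeq N)\to\mu(N)$ directly, sidestepping tightness.
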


Theorem 2.11, along with Theorem 3.1, implies the following, which was Theorem 1.3 in the introduction.

\begin{thm}
Let $T$ be a countable Dedekind domain for which the quotient by any nonzero proper ideal is finite. Let $P=\{\p_1,\dots,\p_k\}$ be a finite set of nonzero prime ideals of $T.$ Let $N$ be a finite $T$-module annihilated by a product of powers of elements of $P.$ Let $u$ be a positive integer and $M(n)$ be an $\epsilon$-balanced $n\times (n+u)$ random matrix with entries valued in $T$, for some $\epsilon\in\R^{\mathcal{I}}_{>0}$, where $\mathcal{I}$ is the set of all nonzero ideals of $T$ such that $T/I$ is an elementary abelian group. Then:
\[\lim_{n\to\infty}\mathbb{P}\left(\mathrm{cok}(M(n))\otimes \prod_{i=1}^kT_{\p_i}\simeq N\right)=\frac{1}{|\mathrm{Aut}(N)||N|^u}\prod_{i=1}^k\prod_{j=1}^\infty (1-|T/\p_i|^{-u-j}),\] where for all $i,$  $T_{\p_i}=\varprojlim T/\p_i^n.$
\end{thm}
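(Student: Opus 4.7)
The plan is to combine the moment estimate of Theorem 2.11 with the distributional converse Theorem 3.1, using a finite quotient of $T$ as a bridge. First I would choose a nonzero $a \in T$ with $aT$ contained in the annihilator of $N$, which exists since $N$ is annihilated by the nonzero ideal $\prod_{i=1}^k \p_i^{e_i}$ for some positive integers $e_i$. Set $R = T/aT$ and let $\bar{M}(n) := M(n) \bmod a$, a random matrix in $M_{n\times(n+u)}(R)$. Only finitely many ideals $I \supseteq aT$ of $T$ have elementary abelian quotient $T/I$, so taking $\epsilon_R = \min_I \epsilon_I$ over this finite collection makes $\bar{M}(n)$ an $\epsilon_R$-balanced random matrix over $R$ in the sense of Definition 2.1.

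Next I would apply Theorem 2.11 to $\bar{M}(n)$ and $N$ (viewed as an $R$-module since $aN = 0$), obtaining $\lim_{n\to\infty} \mathbb{E}(\#\mathrm{Sur}_R(\mathrm{cok}(\bar M(n)), N)) = |N|^{-u}$. Standard tensor-hom adjunction gives $\#\mathrm{Sur}_R(\mathrm{cok}(\bar M(n)), N) = \#\mathrm{Sur}_T(\mathrm{cok}(M(n)), N)$, and since $N$ is supported on $P$ this also equals $\#\mathrm{Sur}(\mathrm{cok}(M(n)) \otimes \prod_i T_{\p_i}, N)$. I would then invoke Theorem 3.1 with base ring $T$, finite quotient $S = T/\prod_i \p_i^e$ for some $e \geq \max_i e_i$, and the sequence $X_n := \mathrm{cok}(M(n)) \otimes_T S$, which is a finite $S$-module. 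The simple $S$-modules are exactly $K_i = T/\p_i$, with endomorphism field of size $q_i = |T/\p_i|$, so Theorem 3.1 produces
\[
\lim_{n\to\infty} \mathbb{P}(X_n \simeq N) = \frac{1}{|\mathrm{Aut}(N)|\,|N|^u} \prod_{i=1}^k\prod_{j=1}^\infty\!\left(1 - \frac{|\mathrm{Ext}^1_S(N,K_i)|}{|\mathrm{Hom}_S(N,K_i)|\,|K_i|^u}\, q_i^{-j}\right).
\]

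To match the stated formula, I would show $|\mathrm{Ext}^1_S(N,K_i)| = |\mathrm{Hom}_S(N,K_i)|$. By the structure theorem for finite modules over a Dedekind domain, $N$ decomposes as a direct sum of cyclic primary summands $T/\p_j^{a}$, and contributions to both $\mathrm{Hom}_S(-,K_i)$ and $\mathrm{Ext}^1_S(-,K_i)$ from summands with $j \neq i$ vanish since $K_i$ is $\p_i$-torsion. Localizing at $\p_i$ (which preserves Ext) and using a periodic free resolution of $T_{\p_i}/\p_i^a T_{\p_i}$ over $T_{\p_i}/\p_i^e T_{\p_i}$ whose differentials act as zero after $\mathrm{Hom}_S(-, T/\p_i)$ (since they are multiplication by powers of a uniformizer), one finds $|\mathrm{Hom}_S(T/\p_i^a, K_i)| = |\mathrm{Ext}^1_S(T/\p_i^a, K_i)| = |T/\p_i|$. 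Substituting gives exactly $\prod_i \prod_j (1 - |T/\p_i|^{-u-j})$. The main obstacle is the bookkeeping between module categories — specifically, verifying that the events $\{X_n \simeq N\}$ and $\{\mathrm{cok}(M(n)) \otimes \prod_i T_{\p_i} \simeq N\}$ differ by an event of vanishing probability as $n \to \infty$. This in turn reduces to ruling out a nontrivial $T_{\p_i}$-free part of the cokernel in the limit, which is forced by the uniform finiteness of the moments supplied by Theorem 2.11.
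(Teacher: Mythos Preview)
Your approach is essentially the paper's own: reduce modulo a large enough ideal, feed Theorem~2.11 into Theorem~3.1, and compute the $\mathrm{Ext}^1/\mathrm{Hom}$ ratio via a periodic resolution. The only substantive difference is bookkeeping in the choice of $S$, and there your write-up is slightly off in a way that makes the last paragraph both necessary and insufficient.

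You take $S=T/\prod_i\p_i^{e}$ with $e\ge\max_i e_i$. If equality holds at some $i$ and $N$ has a summand $T/\p_i^{e_i}$, that summand is \emph{free} over the localized $S$, so $\mathrm{Ext}^1_S$ of it vanishes while $\mathrm{Hom}_S$ does not; your claimed equality $|\mathrm{Ext}^1_S(N,K_i)|=|\mathrm{Hom}_S(N,K_i)|$ then fails. The paper avoids this by taking exponents strictly larger than those appearing in $N$ (it uses $n_i+1$), which guarantees both differentials in the periodic resolution are positive powers of the uniformizer and hence kill $T/\p_i$.

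The same strict inequality also dissolves your ``main obstacle.'' With $e>\max_i e_i$, if $X_n=\mathrm{cok}(M(n))\otimes S\simeq N$ then every primary summand of $Y:=\mathrm{cok}(M(n))\otimes\prod_i T_{\p_i}$ has exponent at most $e-1$ (else $X_n$ would contain a summand $T/\p_i^{e}$, which $N$ cannot), so $Y$ is already an $S$-module and $Y=X_n\simeq N$. Thus the two events coincide exactly, and no asymptotic argument is needed. As written, your reduction ``to ruling out a nontrivial $T_{\p_i}$-free part'' is incomplete anyway: a torsion summand $T/\p_i^{c}$ with $c>e$ (no free part at all) already separates the two events. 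Replace $e\ge\max_i e_i$ by $e>\max_i e_i$ and delete the last paragraph; the proof then matches the paper's.
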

\begin{proof}
Applying Theorem 3.1, let $R=T.$ Let $n_1,\dots,n_k$ be the minimal positive integers such that $N$ is a $T/\p_1^{n_1}\times\cdots\times T/\p_k^{n_k}$-module. Let $S=T/\p_1^{n_1+1}\times\cdots\times T/\p_k^{n_k+1}.$ Viewing $M(n)$ as a matrix with entries valued in $S,$ it has cokernel $\mathrm{cok}(M(n))\otimes S.$ Then by Theorem 2.12, we have that $\{\mathrm{cok}(M(n))\otimes S\}$ is a sequence of random $S$-modules such that $\lim_{n\to\infty}\mathbb{E}(\#\mathrm{Sur}(\mathrm{cok}(M(n))\otimes S,N))=|N|^{-u}.$ Furthermore, we know that $T/\p_1,\dots, T/\p_k$ are the representatives of the isomorphism classes of finite simple $S$-modules, and $q_i=|T/\p_i|$ for all $i.$

We claim that $|\mathrm{Ext}_S^1(N,T/\p_i)|=|\mathrm{Hom}(N,T/\p_i)|.$ Indeed, consider the free resolution of $N=T/\p_1^{a_{11}}\times \cdots\times T/\p_1^{a_{1m_1}}\times\cdots\times T/\p_k^{a_{k1}}\times\cdots\times T/\p_k^{a_{km_k}}$ as an $S$-module, letting $m=m_1+\cdots+m_k$. % https://q.uiver.app/?q=WzAsNSxbMiwwLCJTXm0iXSxbMSwwLCJTXnttfSJdLFszLDAsIlNebSJdLFs0LDAsIk4iXSxbMCwwLCJcXGNkb3RzIl0sWzEsMF0sWzAsMl0sWzIsM10sWzQsMV1d
\[\begin{tikzcd}
	\cdots & {S^{m}} & {S^m} & {S^m} & N
	\arrow[from=1-2, to=1-3]
	\arrow[from=1-3, to=1-4]
	\arrow[from=1-4, to=1-5]
	\arrow[from=1-1, to=1-2]
\end{tikzcd}\] Since all $a_{ij}\leq n_i<n_i+1,$ we have that the first two maps in the resulting cochain % https://q.uiver.app/?q=WzAsNCxbMCwwLCJIb21fUyhTXm0sVC9cXG1hdGhmcmFre3B9X2kpIl0sWzEsMCwiSG9tX1MoU15tLFQvXFxtYXRoZnJha3twfV9pKSJdLFsyLDAsIkhvbV9TKFNebSxUL1xcbWF0aGZyYWt7cH1faSkiXSxbMywwLCJcXGNkb3RzIl0sWzAsMV0sWzEsMl0sWzIsM11d
\[\begin{tikzcd}
	{\Hom_S(S^m,T/\mathfrak{p}_i)} & {\Hom_S(S^m,T/\mathfrak{p}_i)} & {\Hom_S(S^m,T/\mathfrak{p}_i)} & \cdots
	\arrow[from=1-1, to=1-2]
	\arrow[from=1-2, to=1-3]
	\arrow[from=1-3, to=1-4]
\end{tikzcd}\] are 0, so $|\mathrm{Ext}_S^1(N,T/\p_i)|=|\mathrm{Hom}(N,T/\p_i)|.$ Combining the above, we get that Theorem 3.1 implies \[\lim_{n\to\infty}\mathbb{P}\left(\mathrm{cok}(M(n))\otimes S\simeq N\right)=\frac{1}{|\mathrm{Aut}(N)||N|^u}\prod_{i=1}^k\prod_{j=1}^\infty (1-|T/\p_i|^{-u-j}),\] implying the desired result.
\end{proof}

An analogous statement holds for matrices $M(n)$ with entries valued in $T_\p$ for some prime ideal $\p\subset T.$

We remark that when $T$ is the ring of integers of some number field, the condition on $M(n)$ in Theorem 3.2 reduces to assuming that there is an $\epsilon>0$ such that for every rational prime $p$ and every proper affine subspace $H\subset T/pT$, we have $\mathbb{P}(M(n)_{ij}\in H)\leq 1-\epsilon$, implying Theorem 1.1.

Additionally, note that Theorem 3.2 establishes a sufficient condition to yield the distribution described above, but it is not a necessary one, as shown in (1.3) of \cite{CY23}.

When $T=\F_p[x]$, we require the existence of some $\epsilon\in \R_{>0}^{\F_p[x]^\times}$, so that for every nonzero $f\in \F_p[x]$ and every proper affine subspace $H\subset \F_p[x]/f$, $\mathbb{P}(M(n)_{ij}\in H)\leq 1-\epsilon(f).$ One example $M(n)$ that satisfies that hypothesis of Theorem 3.2 samples entries independently from some given distribution supported on $\{1,x,x^2,\dots\}.$

Finally, Lemma 6.8 in \cite{SW22} implies the following.
\begin{thm}
Let $T$ be a countable Dedekind domain for which the quotient by any nonzero proper ideal is finite. Let $P=\{\p_1,\dots,\p_k\}$ be a finite set of prime ideals of $T.$ Let $N$ be a finite $T$-module annihilated by a product of powers of elements of $P.$ Let $u$ be a positive integer and $M(n)$ be an $\epsilon$-balanced $n\times (n+u)$ random matrix with entries valued in $T$, for some $\epsilon\in\R^{\mathcal{I}}_{>0}$, where $\mathcal{I}$ is the set of all nonzero ideals of $T$ such that $T/I$ is an elementary abelian group. Then: \[\lim_{n\to\infty}\mathbb{P}\left(\mathrm{cok}(M(n))\otimes \prod_{i=1}^kT_{\p_i}\,\,\mathrm{ is}\,\mathrm{finite}\right)=1.\] In other words, the probabilities given in Theorem 3.2 sum to 1.
\end{thm}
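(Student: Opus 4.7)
The plan is to invoke Lemma 6.8 of \cite{SW22} in the same reduction framework used in the proof of Theorem 3.2, and then pass from the resulting sum-to-one identity to the claimed finiteness statement via Fatou's lemma.

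First I would repeat the setup of the proof of Theorem 3.2: for a finite $T$-module $N$ annihilated by a product of powers of elements of $P$, choose integers $n_1,\dots,n_k$ such that $N$ is a module over $S = T/\p_1^{n_1+1}\times\cdots\times T/\p_k^{n_k+1}$. By Theorem 2.11 the surjection moments $\mathbb{E}(\#\mathrm{Sur}(\mathrm{cok}(M(n))\otimes S,N))$ converge to $|N|^{-u}$ for every such $N$. Lemma 6.8 of \cite{SW22} is the companion of Lemma 6.3 (quoted as Theorem 3.1) that upgrades this moment convergence into the statement that the limiting probabilities from Theorem 3.1 form a genuine probability distribution on isomorphism classes of finite $S$-modules; combined with the Ext-computation from the proof of Theorem 3.2, this yields the identity
\[\sum_N \frac{1}{|\mathrm{Aut}(N)||N|^u}\prod_{i=1}^k\prod_{j=1}^\infty(1-|T/\p_i|^{-u-j}) = 1,\]
where $N$ ranges over all finite $S$-modules. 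Letting the exponents grow, every finite $\prod_{i=1}^k T_{\p_i}$-module is eventually captured, so the identity extends to a sum over all isomorphism classes of finite $\prod_{i=1}^k T_{\p_i}$-modules.

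Second, I would translate this sum identity into the claimed finiteness statement. Since the events that $\mathrm{cok}(M(n))\otimes \prod_{i=1}^k T_{\p_i}\simeq N$ over distinct finite modules $N$ are pairwise disjoint, and their union is exactly the event that the tensor is finite, Fatou's lemma (for counting measure) gives
\[\liminf_{n\to\infty}\mathbb{P}\left(\mathrm{cok}(M(n))\otimes \prod_{i=1}^k T_{\p_i}\text{ is finite}\right) \geq \sum_N \lim_{n\to\infty}\mathbb{P}\left(\mathrm{cok}(M(n))\otimes \prod_{i=1}^k T_{\p_i}\simeq N\right).\]
Applying Theorem 3.2 pointwise to the right-hand side together with the sum-to-one identity above yields $\liminf_n\mathbb{P}(\text{finite})\geq 1$. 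Since this probability is trivially at most $1$, the limit exists and equals $1$, as desired.

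The main obstacle will be checking that Lemma 6.8 of \cite{SW22} can be combined coherently across the nested family of finite quotients $S$, so as to produce a sum over all finite $\prod_{i=1}^k T_{\p_i}$-modules without double counting or losing mass under the colimit. Once that sum-to-one identity is secured, the Fatou-plus-upper-bound step is routine.
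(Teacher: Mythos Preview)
There is a genuine gap in the first step. Your displayed identity
\[
\sum_{N\ \text{finite }S\text{-module}} \frac{1}{|\mathrm{Aut}(N)||N|^u}\prod_{i=1}^k\prod_{j=1}^\infty(1-|T/\p_i|^{-u-j}) \;=\; 1
\]
is not what Lemma~6.8 combined with the Ext-computation gives, and in fact it is false. The Ext-computation in the proof of Theorem~3.2 shows $|\mathrm{Ext}^1_S(N,T/\p_i)|=|\Hom(N,T/\p_i)|$ \emph{only} when the $\p_i$-exponent of $N$ is at most $n_i$, i.e.\ strictly below the exponent $n_i+1$ defining $S$. For $N$ containing a summand $T/\p_i^{n_i+1}$ (which is projective over $S$), one has $\mathrm{Ext}^1_S(N,T/\p_i)=0$ while $\Hom(N,T/\p_i)\neq 0$, so the Theorem~3.1 formula and the Theorem~3.2 formula disagree on such $N$. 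Concretely, for a single prime with $q=|T/\p|$, the sum $\sum_\lambda |\mathrm{Aut}(N_\lambda)|^{-1}|N_\lambda|^{-u}$ over partitions with parts $\leq m$ is strictly smaller than $\prod_{j\geq 1}(1-q^{-u-j})^{-1}$, so your displayed sum is strictly less than $1$ for every fixed $S$. The ``letting the exponents grow'' step is thus doing essential analytic work that your argument has not supplied; you have correctly flagged this as the main obstacle, but misdiagnosed it as a bookkeeping issue rather than an actual discrepancy in the formulas.

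The paper sidesteps this entirely: it applies Lemma~6.8 of \cite{SW22} directly in the single-prime case $P=\{\p_1\}$ (working over $T_{\p_1}$, where there is no exponent truncation and hence no boundary Ext mismatch), obtaining the sum-to-one identity over all finite $T_{\p_1}$-modules. For several primes it then observes that the right-hand side of Theorem~3.2 factors over the $\p_i$-primary components of $N$, so the sum over all finite $\prod_i T_{\p_i}$-modules is the product of the single-prime sums, each equal to $1$. Your Fatou argument in the second paragraph is correct and would finish the proof once this identity is in hand, but the paper does not need it, since Lemma~6.8 already delivers the finiteness conclusion.
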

\begin{proof}
Lemma 6.8 in \cite{SW22} directly implies the theorem when $P=\{\p_1\}$ contains only one prime. Factoring the probabilities in Theorem 3.2 over primes $\p_i\in P$ gives the desired result.
\end{proof}

\bibliographystyle{plain}
\bibliography{sources}
\end{document}